\documentclass[10pt, a4paper]{article}
\usepackage[margin=1in]{geometry}

\usepackage{setspace}
\doublespace   
%%%%%%%%%%%%%%%%%%%%%%%%%%%%%%%%%%%%%%%%%%%%%%%%%
\usepackage{natbib}
\bibliographystyle{agsm}
\usepackage{filecontents}
\usepackage[toc,page]{appendix}
%%%%%%%%%%%%%%%%%%%%%%%%%%%%%%%%%%%%%%%%%%%%%%%%%

\usepackage{filecontents}
% [inline block 0: 1 envs, 23812 chars -> data_tex | \begin{filecontents}{Reference.bib} ...]

%%%%%%%%%%%%%%%%%%%%%%%%%%%%%%%%%%%%%%%%%%%%%%%%%

\usepackage{verbatim,amscd,epsfig,amsmath,amsthm,amstext,amssymb,bbm,hyperref,appendix}
\usepackage{epstopdf}
\usepackage[francais,english]{babel}
\usepackage{bbm}

%\usepackage{palatino}
% general defs
\providecommand\mathbb{\bf}
\newcommand\R{{\mathbb R}}

\newcommand{\E}{\mathbb{E}}

\DeclareMathOperator*{\argmax}{argmax} % thin space, limits underneath in displays

\usepackage{adjustbox}
\usepackage{layouts}
\newcounter{theoremcounter}

\newtheorem{lemma}[theoremcounter]{Lemma}

\newtheorem{definition}[theoremcounter]{Definition}

\newtheorem{problem}[theoremcounter]{Problem}
\newtheorem{assumption}[theoremcounter]{Assumption}

\newtheorem{game}[theoremcounter]{Game}
\newtheorem{mechanism}[theoremcounter]{Mechanism}
\newenvironment{remark}
      {\par\medbreak\noindent\textbf{Remark.~}}
      {\hbox{ }\medbreak}
\newcounter{steps}

\usepackage{color,xcolor}
\usepackage{float}
\usepackage{mathdots}
\usepackage{upgreek}

\usepackage{mathrsfs}
\usepackage{setspace}
\usepackage{relsize}
\usepackage{caption}

\usepackage[font=small,labelfont=bf,width=0.88\textwidth]{caption}

\usepackage{subcaption}
\captionsetup[subfigure]{singlelinecheck=false}

\usepackage{enumerate}
\usepackage{rotating}
\usepackage{natbib}
\usepackage{graphicx}
\usepackage{enumitem}   
%\usepackage{dtklogos}
%\usepackage{tikz}
%\usepackage{tikz,times}
%\usetikzlibrary{mindmap,backgrounds,shadows}
%\usetikzlibrary{decorations.pathreplacing}
\usepackage{acro}
%%%%%%%%%%%%%%%%%%%%%%%%%%%%%%
%%Alphabet
\newcommand{\n}{\noindent}

\newcommand{\vb}{\mbox{\boldmath{$b$}}}

\newcommand{\vx}{\mbox{\boldmath{$x$}}}

\newcommand{\vO}{\mbox{\boldmath{$O$}}}

\newcommand{\vX}{\mbox{\boldmath{$X$}}}

\newcommand{\valpha}{\mbox{\boldmath{$\alpha$}}}

\newcommand{\vpi}{\mbox{\boldmath{$\pi$}}}

\newcommand{\1}{\mbox{\boldmath{$\mathbbm{1}$}}}
\newcommand*\diff{\mathop{}\!\mathrm{d}}
\newcommand{\Prob}{\ensuremath{\mathbb{P}}}

\usepackage{mathtools}
\usepackage{cleveref}
\usepackage[ruled,vlined]{algorithm2e}
\SetKwInput{KwInput}{Input}
\SetKwInput{KwOutput}{Output}
\SetKwInOut{Parameter}{Parameter}

\usepackage{cases}
\usepackage{lineno}
\usepackage{setspace}
\DeclareMathAlphabet\mathbfcal{OMS}{cmsy}{b}{n}

%%%%%%%%%%%%%%%%%%%%%%%%%%%%%%%%%%%%%%%%%%%%%%%%%
\newcommand{\floor}[1]{\lfloor #1 \rfloor}

%%%%%%%%%%%%%%%%%%%%%%%%%%%%%%
% vertical timeline
%%%%%%%%%%%%%%%%%%%%%%%%%%%%%%
\usepackage[T1]{fontenc}
\usepackage[utf8]{inputenc}
\usepackage{charter}
\usepackage{environ}
\usepackage{tikz}
\usetikzlibrary{calc,matrix,babel}

\makeatletter
\let\matamp=&
\catcode`\&=13
\makeatletter
\def&{\iftikz@is@matrix
  \pgfmatrixnextcell
  \else
  \matamp
  \fi}
\makeatother

\newcounter{lines}

\newcounter{vtml}
\setcounter{vtml}{0}

\newif\ifvtimelinetitle
\newif\ifvtimebottomline
\tikzset{description/.style={
  column 2/.append style={#1}
 },
 timeline color/.store in=\vtmlcolor,
 timeline color=red!80!black,
 timeline color st/.style={fill=\vtmlcolor,draw=\vtmlcolor},
 use timeline header/.is if=vtimelinetitle,
 use timeline header=false,
 add bottom line/.is if=vtimebottomline,
 add bottom line=false,
 timeline title/.store in=\vtimelinetitle,
 timeline title={},
 line offset/.store in=\lineoffset,
 line offset=4pt,
}

\NewEnviron{vtimeline}[1][]{%
\setcounter{lines}{1}%
\stepcounter{vtml}%
\begin{tikzpicture}[column 1/.style={anchor=east},
 column 2/.style={anchor=west},
 text depth=0pt,text height=1ex,
 row sep=1ex,
 column sep=1em,
 #1
]
\matrix(vtimeline\thevtml)[matrix of nodes]{\BODY};
\pgfmathtruncatemacro\endmtx{\thelines-1}
\path[timeline color st] 
($(vtimeline\thevtml-1-1.north east)!0.5!(vtimeline\thevtml-1-2.north west)$)--
($(vtimeline\thevtml-\endmtx-1.south east)!0.5!(vtimeline\thevtml-\endmtx-2.south west)$);
\foreach \x in {1,...,\endmtx}{
 \node[circle,timeline color st, inner sep=0.15pt, draw=white, thick] 
 (vtimeline\thevtml-c-\x) at 
 ($(vtimeline\thevtml-\x-1.east)!0.5!(vtimeline\thevtml-\x-2.west)$){};
 \draw[timeline color st](vtimeline\thevtml-c-\x.west)--++(-3pt,0);
 }
 \ifvtimelinetitle%
  \draw[timeline color st]([yshift=\lineoffset]vtimeline\thevtml.north west)--
  ([yshift=\lineoffset]vtimeline\thevtml.north east);
  \node[anchor=west,yshift=16pt,font=\large]
   at (vtimeline\thevtml-1-1.north west) 
   {\textsc{Timeline \thevtml}: \textit{\vtimelinetitle}};
 \else%
  \relax%
 \fi%
 \ifvtimebottomline%
   \draw[timeline color st]([yshift=-\lineoffset]vtimeline\thevtml.south west)--
  ([yshift=-\lineoffset]vtimeline\thevtml.south east);
 \else%
   \relax%
 \fi%
\end{tikzpicture}
}

%%%%%%%%%%%%%%%%%%%%%%%%%%%%%%%%%%%%%%%%%%%%%%%%%%%%%%%%%%%%

\title{A Mean Field Game Analysis of Consensus Protocol Design } 

\author{
 Lucy Klinger 
\\ \small {Beijing International Center for Mathematical Research, Peking University, Beijing, 100871, China}
\\ {\small \href{mailto:lucyk@bicmr.pku.edu.cn}{lucyk@bicmr.pku.edu.cn} }
\and \hspace{-0.8cm} Lei Zhang 
\\ \footnotesize{ Beijing International Center for Mathematical Research, Center for Quantitative Biology, Peking University, Beijing 100871, China }
\\ {\small \href{mailto:zhangl@math.pku.edu.cn}{zhangl@math.pku.edu.cn}}
\and Zhennan Zhou 
\\ \small { Beijing International Center for Mathematical Research, Peking University, Beijing, 100871, China}
 \\ {\small \href{mailto:zhennan@bicmr.pku.edu.cn}{zhennan@bicmr.pku.edu.cn} }
 }

\date{}

\providecommand{\keywords}[1]{\textbf{\textit{Keywords:}} #1}

\makeindex

%%%%%%%%%%%%%%%%%%%%%%%%%%%%%%%%%%%%%%%%%%%%%

\begin{document}
\addtocontents{toc}{\protect\setcounter{tocdepth}{2}}
\maketitle

\n \keywords{
mean field game,
Hamilton-Jacobi-Bellman equation,
Fokker-Planck equation,
decentralized blockchain, 
mechanism design,
consensus protocol,
Proof-of-Work
}

\begingroup
\let\clearpage\relax

\abstract 

\n A decentralized blockchain is a distributed ledger that is often used as a platform for exchanging goods and services. This ledger is maintained by a network of nodes that obeys a set of rules, called a consensus protocol, which helps to resolve inconsistencies among local copies of a blockchain. In this paper, we build a mathematical framework for the consensus protocol designer, specifying (a) the measurement of a resource which nodes strategically invest in and compete for to win the right to build new blocks in the blockchain; and (b) a payoff function for such efforts. Thus, the equilibrium of an associated stochastic differential game can be implemented by selecting nodes in proportion to this specified resource and penalizing dishonest nodes by its loss. This associated, induced game can be further analyzed using mean field games. The problem can be broken down into two coupled PDEs, where an individual node’s optimal control path is solved using a Hamilton-Jacobi-Bellman equation, and where the evolution of states distribution is characterized by a Fokker-Planck equation. We develop numerical methods to compute the mean field equilibrium for both steady states at the infinite time horizon and evolutionary dynamics. As an example, we show how the mean field equilibrium can be applied to the Bitcoin blockchain mechanism design. We demonstrate that a blockchain can be viewed as a mechanism that operates in a decentralized setup and propagates properties of the mean field equilibrium over time, such as the underlying security of the blockchain.

%%%------------------------------------------------%%------------------------------------------------
%{\setstretch{1}
%\printacronyms[include-classes=terminology, name={Terminology}]
%\printacronyms[include-classes=notation, name={Notation}]
%}
%%%------------------------------------------------%%------------------------------------------------

%\tableofcontents

\newpage
\section{Introduction}\label{section_1}

\n A {\it blockchain} is a decentralized ledger that \hypertarget{fig_identities}{%represent asset ownership, among other things, and is often used as a platform for exchanging goods and services. 
records sequences of real-time transactions, denoted by $\{ T_t \}_{t \in [0, \infty)}$}, representing assets ownership
%, among other things, 
at time $t$ \citep{nakamoto2008bitcoin, atzei2018formal}. %bartoletti2020formal}.
Blockchains are most often used as platforms for exchanging goods and services \citep{kim2020advanced}.
With no reliance on a trusted central authority, 
a blockchain is maintained by \hypertarget{fig_consensus_nodes}{a set of nodes, indexed by $i \in \{1,2,\cdots,{M}\}$, in a {\it decentralized network}, as shown in the bottom right corner of \autoref{Blockchain_structure}. %adhering to a protocol 
%without relying on any trusted central authority.
This decentralized network consists of vertices and edges, 
where vertices represent nodes and edges represent communication between nodes.}
 \begin{figure}[h]
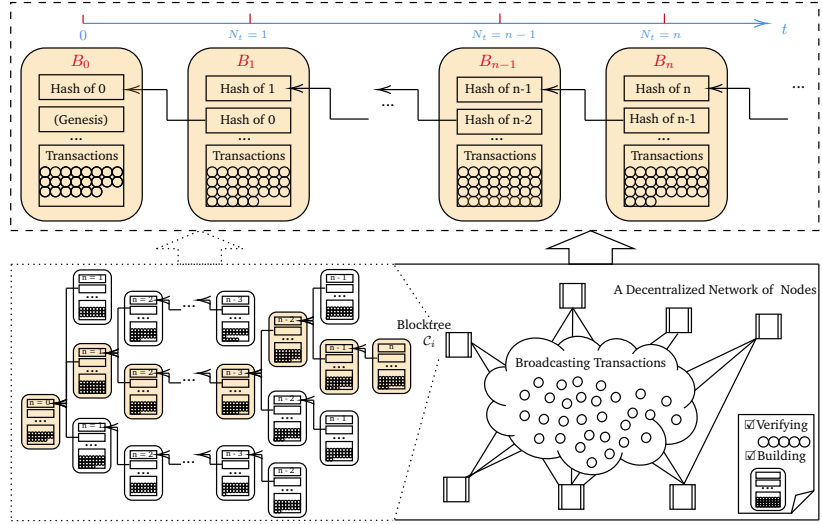

 \centering
\resizebox{.68\textwidth}{!}{

\tikzset{every picture/.style={line width=0.75pt}} %set default line width to 0.75pt        

% [inline block 1: 1 envs, 263325 chars -> data_tex | \begin{tikzpicture}[x=0.75pt,y=0.75pt,yscale=-1,xscale=1] %uncomment if require: \path (0,438); %set diagram left start ...]

}
 \caption{An Overview of Blockchain Mechanism}
 \label{Blockchain_structure}
 \end{figure}
Those nodes record transactions by building a sequence of valid discretized blocks denoted by $\mathcal{B}$.
A {\it block} $B(n) \in \mathcal{B}$ with a block height $n$ consists of a block header which includes metadata, 
together with a sequences of timed transactions not found in previous block.
\begin{equation}\label{block} 
B(n) := \big\langle ~ \underbrace{\text{\scriptsize version}  ,~ \text{\scriptsize difficulty} ,~ \text{\scriptsize nonce} ,~  \text{\scriptsize previous block's hash}  ,~ \text{\scriptsize timestamp} ,~ \text{\scriptsize Merkle root}} _{\text{Block Header 
}},~  \underbrace{\{ T_t ~|~ T_t \not\in B(n'), \forall n'<n \}}_{\text{Block Body}} ~~ \big\rangle
\end{equation}
Here, {\it version} is what is followed by the consensus protocol node, while {\it  difficulty} and {\it nonce} refer to their respective use in the protocol design. {\it Timestamp} is when a block was built, and {\it Merkle root} is used for quick validation to retrieve complete transaction records
$\{T_t \in B(n') , \forall n'<n \}$.
Each block $B(n)$ is linked to the block $B(n-1)$ with a hash pointer $\Lsh$, 
as denoted by $B(n-1) \Lsh B(n)$. 
This notion represents the previous block’s hash information when it is stored in the current block’s header, as shown at the top of \autoref{Blockchain_structure}.
Such block arrival process can be modeled as a non-homogeneous Poisson process $\{N_t, t \geq 0\}$ with intensity $\int_0^{t} {{\lambda}} (s) \diff s $ \citep{fralix2020classes}
and with a probability mass function that is is given by 
\begin{equation}\label{def_poisson_intensity}
\Prob \{N_t =n\}={\frac {\big(\int_0^{t} {{\lambda}} (s) \diff s \big)^{n} }{n!}}e^{-\int_0^{t} {{\lambda}} (s) \diff s }.
\end{equation}
In a decentralized structure,
\hypertarget{fig_replica}{each node has its own local replica of the {\it blocktree} $ \mathcal{C}_i (n)$, as shown in the bottom left corner of \autoref{Blockchain_structure}.
A blocktree $ \mathcal{C}_i (n) \subset (\mathcal{B},\{\Lsh\})$ is a directed graph
that is made up of a subset of all possible valid block paths from a genesis block $B(0)$, such that for each valid block $B(n) \in \mathcal{B}$, there is exactly one path from $B(0)$ to $B(n)$ in a blocktree ${ \mathcal{C}_i} (n)$ of up to block height $n$}.
For a given block tree $\mathcal{C}_i (n)$ with a maximum height $n$,
our ultimate goal is to create a {\it blockchain} that has a fully validated ledger of transactions, 
which is formally defined as the path 
\begin{equation}\label{blockchain}
 \Big\{ B(0)  \Lsh B(1)  \Lsh \dots  \Lsh B(n) ~ \Big| ~ B(n) \in \mathcal{B} \Big\}  \subseteq\mathcal{C} (n) \,,
\end{equation}
where, the genesis block $B(0)$ to the current block 
$\{ B(n) |  B(n) \in \mathcal{B} \}$, a correct node is proposed.

Often, a variety of orphan blocks represents a latency in the network, 
which occurs when nodes broadcast blocks onto the network simultaneously. 
Due to this broadcasting delay, such nodes may observe different versions of the blocktree, 
as denoted by the collection of blocktrees
\begin{equation}\label{blocktree_set}
\mathbfcal{C}  = {\bigcup\limits_{i=1}^{{M}}  \mathcal{C}_{i} (n)} \,.
\end{equation}
Theoretically, the concept of a blocktree's appearance is based on a subjective consensus \citep{narayanan2016bitcoin}.
Therefore, to resolve inconsistencies among individual nodes, 
we need a set of rules  
\begin{equation}\label{consensus_rule}
{g}:  
\mathbfcal{C}  \rightarrow \mathcal{C} (n) \,,
\end{equation}
called the {\it consensus protocol},
so that every node can update its local replica of the blocktree to match the one proposed by a correct node \citep{zhang2020modeling}.

It has been observed that 
when a transaction receives more confirmations, 
a block containing that transaction will have a greater number of other blocks pointing to it. As a result, this transaction has an increased likelihood of being included in the blocktree proposed by a correct node. Using the equilibrium of a noncooperative game can help the consensus protocol to achieve this outcome. Instead of picking a random node, we can build a game environment that allows a large number of risk neutral nodes to compete for the right to build block $B(n)$, so that new transactions are verified by numerous confirmations derived from competition between nodes (a process normally referred to as   \emph{mining a block}) \citep{liu2019survey}. 
Apart from the consensus protocol, nodes do not face any other strategic restrictions. Rational nodes act in order to maximize their own utility. Malicious nodes, on the other hand, may launch attacks that damage blockchain networks. A game environment will allow the mechanism to approximately select correct nodes. To accomplish this, a stochastic differential game model can be built to operate within a decentralized setup
\citep{dhamal2019stochastic}.

\begin{game}[Stochastic Differential Game]\label{dynamic_game_model}
A stochastic differential game $\gamma$ for ${M}$ nodes, played in real time,
is a 4-tuple 
\[
\gamma = \Big\langle  \{1,2,\cdots,{M}\}, {\Omega}, \vO, \vpi \Big\rangle \,,
\]
which
consists of the following components:
\begin{enumerate}[label=(\Roman*), font=\upshape] 
\item a finite set of rational and intelligent non-cooperative nodes indexed by $i \in \{1,2,\cdots,{M}\}$
\item 
a filtered probability space $\big(\Omega, \mathcal{F}, \Prob \big)$ 
where the filtration is denoted by $\left(\mathcal{F}_{t }\right)_{0 \leq t \leq \infty}$ of $\upsigma$-fields on $(\Omega, \mathcal{F})$ such that $\mathcal{F}_{t'} \subset \mathcal{F}_{t} \subset \mathcal{F}, \forall t' \leq t $, 
which represents the information available \footnote{Here we borrow the notation from \citep{marek2017}} to all nodes at time $t$.
Let $\big(x_{i}(0) , b_{j}(0) \big)$ be a random variable known up to a $ \Prob$-null set and let $W$ and $ \mathcal{P}$ be a
Brownian motion and a Poisson random measure with intensity $\lambda$, respectively. 
Assume that $W$ and $ \mathcal{P}$ are independent of $\mathcal{F}_0$. 
Each node has a $2$-dimensional stochastic random state process 
\[
\Big(x_{i}(\cdot) , b_{j}(\cdot) \Big) \in X_{i} \times Y_{j}  \subset \R^{2} \,,
\] 
adapted to filtration $\mathbb{F} :=\left(\mathcal{F}_{t}\right)_{0 \leq t \leq \infty}$, 
where $x_{i}(\cdot) $ represents certain state dynamics (e.g. a wealth process) and $b_{j}(\cdot) $ represents the token price process.

\item  a set of all possible outcomes $\vO = \displaystyle {\prod_{i=1}^{{M}} O_{i}}$, where $O_i$ is the set of admissible controls for node $i$
\begin{equation}
\displaystyle O_i := \Big\{ \alpha_i (\cdot) : {\Omega} \to \R \mbox{ measurable }~ \forall t \Big\} \, 
\end{equation}
\item \label{optimal_control_pro}
 a set of all possible payoffs $\vpi = (\pi_{1},\dots,\pi_{{M}})$, where node $i$ chooses the optimal control $\alpha_i$ to maximize its running payoff functional over time $t \in [0,\infty)$
\begin{equation}\label{optimal_control_2}
 \pi_{i} \Big(\alpha_i, [0,\infty) \Big) := 
\E \left[ \int_{0}^{\infty} u_{i} \Big(  x_{i}(t), b_{j}(t), {\alpha_{i}(t)}, \vx_{-i}(t), \vb_{-j}(t), {\valpha_{-i}(t)} \Big)    e^{-r t} \, \diff t \right]
\end{equation}
over the action taken by all nodes.
We use ${{-i}}$ to denote all nodes except node $i$, while $r > 0$ represents a fixed discount factor.
Here, the marginal payoff function 
is associated with some controlled evolution, 
which is driven by the Markovian feedback control
 jump-diffusion process
\begin{equation}
 \diff x_i (t) = \Big( r x_t - c({\alpha_i({t})} ) \Big)  \diff t\ + b_{j}(t) k_t \diff N_t  ,,
\end{equation}
where $c$ is a cost function; 
the {reward} that arrives at system $\mathcal{P} ( \vO \times \vX )$ 
is calculated according to a non-homogeneous Poisson process, 
as described in \autoref{def_poisson_intensity}.
\end{enumerate}
\end{game}

Nodes in the network are coupled through payoffs and state dynamics. 
Their strategic interactions, as well as the evolution of the above game, can be analyzed using concepts from game theory.
Within the context of a stochastic differential game $\gamma$,  
nodes can learn from and predict each other's behaviors, 
and then develop optimal reaction strategies based on equilibrium analysis. 
The set of optimal strategies that achieve a Nash equilibrium in every state of the stochastic differential game is named a Markov perfect equilibrium
\citep{shapley1953stochastic}.
\begin{definition}[Markov Perfect Equilibrium (MPE)] \label{Markov_Perfect_Equilibrium}
An ${M}$-tuple of feedback control functions
\[
\hat \valpha  = \Big( \hat \alpha_{1}, \hat \alpha_{2}, \ldots , \hat \alpha_{{M}} \Big)
\]
constitutes a Markov Perfect Equilibrium (MPE) for Stochastic Differential Game \ref{dynamic_game_model} 
if the feedback control ${\hat \alpha}_{i}(\cdot)$
provides a solution to the Optimal Control Problem \ref{optimal_control_pro}
for the $i$-th node for $\forall i \in \{1,2,\cdots,{M}\}$
subject to the controlled dynamics of the system for every initial conditions $x_i(0)$.

\end{definition}

As time evolves in a game, it is assumed that nodes will typically place somewhere near Equilibrium \ref{Markov_Perfect_Equilibrium}. 
We aim to design a game that selects nodes in proportion to the ownership of a substantial resource which cannot be monopolized and also functions as a tamper-proof summary of large data files. To achieve this, we need to impose a surjective mapping
\begin{equation}\label{measurement}
\Psi : \mathbfcal{C} \to \vO
\end{equation}
between blocktrees and the outcome of Game \ref{dynamic_game_model},
which guides 
nodes to invest strategically in a substantial resource 
so they can compete to build new blocks (\autoref{block}) in the blockchain  (\autoref{blockchain}).
Such a design would penalize dishonest nodes through loss of resources, 
thereby discouraging nodes from misbehaving or launching attacks. 
For example, this substantial resource can be, among others, a measurement of work done 
 (used in a Proof-of-Work protocol) or a measurement of ownership (used in a Proof-of-Stake protocol)
\citep{wang2019survey}.
\begin{definition}[Consensus Protocol]\label{Consensus_Protocol_Mechanism}
A consensus protocol in blockchain mechanism for $\gamma$ (described in Game \ref{dynamic_game_model}) is a 4-tuple 
\[
\mathfrak{C} = \Big\langle \mathbfcal{C}, \Psi, \vpi, g \Big\rangle, 
\]
which
consists of the following components:
\begin{enumerate}[label=(\Roman*), font=\upshape] 
\item a set of different local replicas of blocktrees $\mathbfcal{C}  = {\bigcup\limits_{i=1}^{{M}}  \mathcal{C}_{i}}$  , as described in \autoref{blocktree_set}
\item a surjective mapping $\Psi: \mathbfcal{C} \to \vO$, as described in \autoref{measurement}, which is designed to allow nodes to invest strategically in a substantial resource, so they can compete to create new blocks $B(n)$ with the increase of height $n$ in the blockchain, as described in \autoref{blockchain}

\item nodes that are rewarded for their efforts, which is reflected in payoff function $\vpi$, as described in \autoref{optimal_control_2}

\item 
a set of rules that becomes
${g} \big( \Psi^{-1}(\vO) \big)$, as described in \autoref{consensus_rule}, 
so that every node can update its local replica of blocktree to the one chosen by the outcome of  game $\gamma$,
resulting in a consensus $\mathcal{C} (n)$ among all nodes.

\end{enumerate}
\end{definition}
Based on implementation theory \citep{jackson2001crash}, the equilibrium $\hat \valpha \in \vO$ can then be utilized to develop an incentive scheme within a consensus protocol, which nodes participate in and contribute resources to for the purpose of maintaining the blockchain \citep{swan2015blockchain}. 
This equilibrium is what governs the underlying security of the blockchain.
\begin{problem}\label{design_problem}
Given a game environment $\gamma = \Big\langle  \{1,2,\cdots,{M}\}, {\Omega}, \vO, \vpi \Big\rangle$
as described in Definition \ref{dynamic_game_model},
the designer needs to specify two mappings $\Psi$ and $\vpi$ in the consensus protocol $\mathfrak{C} = \Big\langle \mathbfcal{C}, \Psi, \vpi, g \Big\rangle
$ as described in Definition \ref{Consensus_Protocol_Mechanism},
so that 
an equilibrium 
$
\hat \valpha = \Psi(\mathbfcal{C} )
$
of the game of incomplete information 
\[
\Gamma = \Big\langle  \{1,2,\cdots,{M}\},{\Omega}, \Psi(\mathbfcal{C} ), \vpi \Big\rangle
\]
can be implemented for all states $\vx(\cdot) \in {\Omega} $ over ${M}$ and $\vO$.
\end{problem}

Within this framework, we aim to establish the existence of the Nash equilibrium described above. 
Without loss of generality, we use the Proof-of-Work (PoW)
Protocol in the Bitcoin blockchain as an example to demonstrate the existence of such an equilibrium. 
Note that more complex schemes, such as the Proof-of-Stake (PoS) mechanism \citep{bentov2014proof}, 
can be substituted for the PoW analysis used in our work.

\subsection{
Proof-of-Work 
Protocol}\label{example_POW}

As discussed in Problem \autoref{design_problem}, 
we aim to use two mappings, $\Psi$ and $\vpi$ (as described in Definition \ref{Consensus_Protocol_Mechanism}),
to demonstrate the existence 
of an equilibrium in the Bitcoin mining game.
In this subsection, 
we first aim to describe the two mappings, $\Psi$ and $\vpi$, as specified in the PoW protocol  of the Bitcoin
blockchain.

In the PoW protocol, the game selects nodes in proportion to the amount of work done by the computing power that is allocated to building blocks (see \Cref{Appendix_POW} for more details).
The surjective mapping $\Psi$, as described in \autoref{measurement},  is now replaced by a measurement of work done, 
i.e., the hashrate function $\alpha$, which represents the expected computing power needed for nodes to compete for the right to create new blocks $B(n)$ with an increase of height $n$ in the blockchain, as described in \autoref{blockchain}. 
Consequently, every node adheres to the longest chain \citep{nakamoto2008bitcoin} and updates its local blocktree replica to match the blocktree that maps onto the game’s outcome, representing the maximum computing power used:
\begin{equation}\label{choose_maximum_resources}
{g} \big( \Psi^{-1}(\vO) \big) := \Big\{ C_j ~\Big| ~ j = \hspace{-0.1cm} \argmax_{i \in  \{1,2,\cdots,{M}\}} \valpha \Big\} \,. 
\end{equation} 
Such longest chain rule design can probabilistically prohibit faulty node behaviors, including malicious attacks, node mistakes, and connection errors \citep{wang2019survey}.

The payoff function $\vpi$ is designed to offer rewards within a blockchain so that nodes participate in the game as well as control reward amounts that have an exponential decay design (as demonstrated in \autoref{fig_Ratio_of_k_K}):
\begin{equation}\label{inflation_rate}
\text{the inflation rate} = \frac{\text{tokens per block} \times \text{block per second} }{\text{total circulation}} \,.
\end{equation}
This inflation rate design can lead to a fixed finite token supply, which is a necessary condition for achieving a meaningful just price of money \citep{roets1991bernard}. Such an aim can be achieved by choosing a counting process for deciding the number of token rewards \citep{bowden2018block},
as demonstrated in \autoref{fig_reward_number}.
This reward process is summarized as follows:
\begin{enumerate}[label=(\Roman*), font=\upshape] 

\item \label{reward_mechanism_1} the intensity of the block arrival process is set to be (see \autoref{Appendix_POW} for detailed derivations)
\begin{equation}\label{expected_block_arrival_rate}
\left({{{\lambda}}}_t \right)_{t \in (t_{2016(s-1)}, t_{2016s}]} = \frac{2016}{t_{2016s}-t_{2016(s-1)}}
=  \frac{1}{600} \left( \frac{{  1- \sqrt[M]{1- \frac{2016}{{H_{s-1}}} } }}{1- \sqrt[M]{1- \frac{2016}{H_s}} }\right) \,,
\end{equation}
where the total hashes over time segment $(t_{2016(s-1)}, t_{2016s}]$ is defined as
\begin{equation}\label{initial_hash}
 H_s := {\int_{t_{2016(s-1)}}^{t_{2016s}} h(t) \diff t} 
\qquad \qquad
\mbox{with} 
\qquad \qquad
 H_0 := \frac{2016}{1-\left(1-\frac{1}{2^{32}} \right)^{M}} \, . \qquad
\end{equation}
and the total hashrate $h_t$ at time $t$ is the stochastic variable
\[
h(t) := \sum_{i \in  \{1,2,\cdots,{M}\}} \alpha_i (t)  \,,
\]
which represents the total computational power nodes invest in, as well as the overall confidence or sentiment regarding the blockchain system
\item \label{reward_mechanism_2} 
the number of token in each block at time $t$ is
\begin{eqnarray}\label{number_of_block_reward}
k_t 
= 50 \left(\frac{1}{2}\right)^{l} + \bar k \,,
\end{eqnarray}
where each time interval $l \in \{1, . . . , 32 \}$ includes 210000 blocks:
\[
l = \floor{\frac{2016 N_t}{210000}},
\quad
\mbox{omitting the small transaction fees }
\bar k \to 0  \, .
\]
Hence the number of cumulated tokens in circulation is 
\begin{equation}\label{number_of_cumulated_tokens}
{K_t} = \left(210000 \times 50 \right) \left({\frac {1-(\frac{1}{2})^{l}}{1-\frac{1}{2}}}\right)+ \left(2016-\hspace{-0.3cm}\mod{(210000 l,2016)}\right)\left(50\left(\frac{1}{2}\right)^{l}\right) \,.
\end{equation}
Note that ${k_t} $ and ${K_t}$ are both measured with block arrival process $N_t$ (as described in \autoref{def_poisson_intensity}).
\end{enumerate}

\begin{figure}[H]
\centering
{\includegraphics[width=0.62\linewidth]{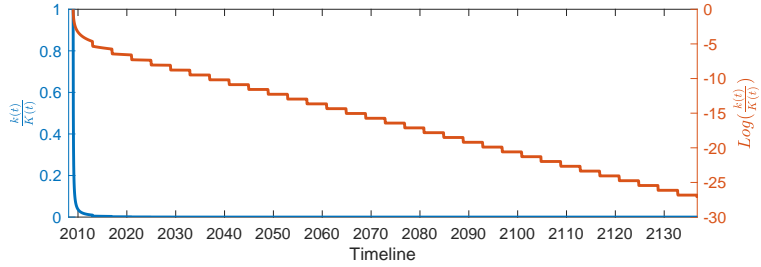}}
%{\includegraphics[width=0.6\linewidth]{Fig/Inflations.eps}}
\caption{
The token’s inflation rate in the Bitcoin blockchain is plotted over the timeline.
As described in \autoref{inflation_rate}, the ratio of the number of tokens created over time to the total number of tokens in circulation,
  which starts at 1 and decays exponentially until it reaches 0.
This curve is controlled by the design of Mechanisms \ref{reward_mechanism_1} --\ref{reward_mechanism_2}. 
}
\label{fig_Ratio_of_k_K}
\end{figure}

\vspace{-0.3cm}
\begin{figure}[H]
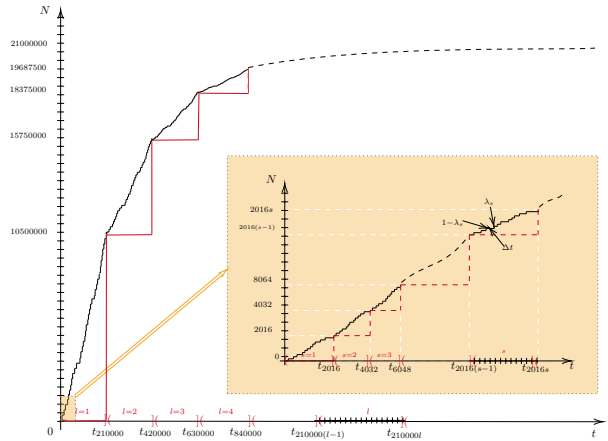

  \centering
\resizebox{0.5\textwidth}{!}{

\tikzset{every picture/.style={line width=0.75pt}} %set default line width to 0.75pt        

% [inline block 2: 1 envs, 52484 chars -> data_tex | \begin{tikzpicture}[x=0.75pt,y=0.75pt,yscale=-1,xscale=1] %uncomment if require: \path (0,527); %set diagram left start ...]

}
 \caption{
% The block arrival process can be modeled as a counting process; the mining difficulty adjusts every 2016 blocks, the amount of each block reward starts at 50 per block and is set to halve continually every 210,000 blocks.
 The block arrival process modeled as a counting process. The mining difficulty adjusts every 2016 blocks, while each block reward starts at 50 tokens per block and halves every 210,000 blocks.
}
  \label{fig_reward_number}
\end{figure}

Modern mathematical financial theory postulates that perfect markets should have no arbitrage opportunities available.  Assuming an arbitrage-free token pricing model, the formation of total token value should be converted from the total cost of resources used in the production through the mining process \citep{hayes2019bitcoin}.
\begin{assumption}[Token Price]\label{assumption_token_price}
We assume that the market value of a token is an Ornstein-Uhlenbeck process (OU-process)
$\left\{b_{t}\right\}_{t \in [0,\infty)}$, which is driven by %revealed towards 
an equilibrium level of multiple of its cost of production
\begin{equation}
\diff b_t =  (\hat b_t - b_t) \diff t +\sigma \diff W_t \,,
\end{equation}
where $W_{t}$ is the standard Wiener process, and a multiple of the cost of a token production,
\begin{equation}\label{token_price}
\hat b_{t}= \beta \frac{c(h_t)}{K_t} \,,
\end{equation}
is the ratio of the blockchain production cost to the number of cumulated tokens in circulation $K_t$  (as described in \autoref{number_of_cumulated_tokens}),
which acts as an equilibrium level for the token price process: 
if the current market value of the  token process is less than the production price $\hat b_{t}$, the drift will be positive; 
if the current market value of the  token process is greater than production price $\hat b_{t}$, the drift will be negative. 

The parameter $\beta$ can be estimated from the curve fitting tool, as demonstrated in \autoref{token_price_fig}.
%, and we set it at 1 in this paper.
We also assume that the Poisson process is independent of the Wiener process.
\end{assumption}

\begin{figure}[H]
  \centering
  \includegraphics[width=0.5\linewidth]{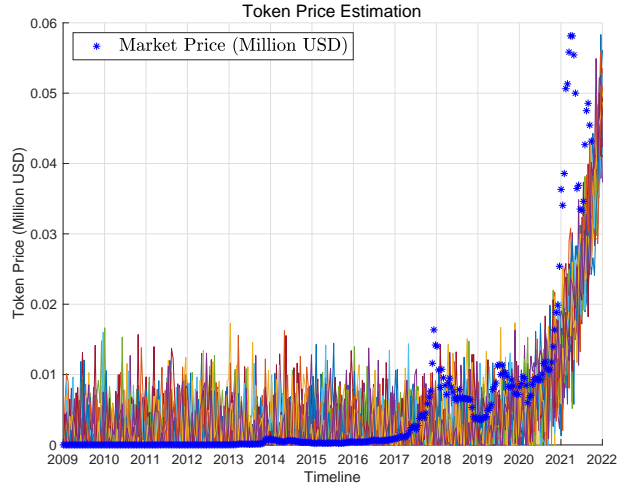}
\vspace{-0.2cm}
 \caption{
 The value of each reward is the token market price, labeled in blue stars, which is driven by supply and demand in the marketplace. Fluctuations in supply and demand can exhibit large spikes or drops, which are influenced by factors, including market information, regulations governing its sale, speculation, consumer sentiment, and the rumor mill \citep{chu2015statistical}. Here, we assume the market price of the token has a mean reversion OU-process, which can be shown as a multiple of its cost of production (labeled in red). This parameter $\beta = 2 \times 10^4$ is estimated by using the curve fitting tool, while simulating 18 sample paths and the volatility is set to be $\sigma = 0.005$.}
  \label{token_price_fig}
\end{figure}

Overall, Problem \ref{design_problem} boils down to an economics question: whether it is profitable for nodes to mine under the conditions provided by the PoW protocol \ref{example_POW} and the token price Assumption \ref{assumption_token_price}. 
We can explore the answer by analyzing the equilibrium $\hat \valpha$ of the game of incomplete information $\Gamma$ (in Problem \ref{design_problem}) and substituting the function $\pi$ specified in Mechanisms \ref{reward_mechanism_1} -- \ref{reward_mechanism_2}.
%
%%%%%%%%%%%%%%%%%%%%%%%%%%%%%%%%%%%%%%%%%%%%%%

\subsection{Related Works and Main Results} 

Naturally, game theory approaches have been applied to problems of incentive mechanism design in blockchain consensus protocols. 
There are a number of published surveys on game theoretical approaches to blockchain-related issues. 
\cite{liu2019survey} reviews game models proposing to address common issues in blockchains, 
including security, mining management, and economic and energy trading. 
\cite{wang2019survey} provides a comprehensive survey on consensus mechanisms in blockchain backbone networks and strategies for self-organization by individual nodes from a game-theoretic point of view. 
\cite{sadek2020blockchain} presents a wide range of consensus algorithms and uses a comprehensive taxonomy to examine their properties and implications in tabular formats and other illustrations. 
The mechanism design of consensus protocol algorithms has also been discussed in \citep{weeks2018evolution} and other forums.

Finding an MPE in Definition  \ref{Markov_Perfect_Equilibrium} is complex, 
but mean field games are known to approximate induced Stochastic Differential Games \ref{dynamic_game_model} well with a large number of nodes. 
Mean field game theory was first created to study the concept of the MPE in stochastic differential games where the number of players tends to infinity. Introduced in \citep{lasry2006jeux, lasry2007mean}, a similar approach was formulated independently in \citep{huang2006nash}.
Successive online courses at the Coll{\'e}ge de France \citep{lions2007theorie} and lecture notes \citep{gueant2011mean} presented a number of developments, covering topics such as the structures, concepts, and existence of equilibrium; links with Nash equilibria and dynamics in n-player game theory (when n tends to infinity); variational principles for decentralization; and notions of stability in solutions. 
Since these concepts have been extensively developed, other authors have made further contributions in various applications, notably in economics \citep{achdou2014partial, achdou2017income}. 
If each node is considered to be small, thus having a negligible impact on the mean hashrate, there is a convergence of the MPE to a weak MFE in open-loop cases. In some partial results and examples, weak MFG equilibria can arise as the limit of the MPE \citep{lacker2018convergence}. From a numerical perspective, a finite difference method has been applied to approximate the mean field equilibrium in several studies \citep{achdou2010mean, achdou2013mean, achdou2016convergence, achdou2020mean}.

In our paper, we aim to test the modified mean field game design of Stochastic Differential Games \ref{dynamic_game_model} with multi-dimensional dynamic formulations of some state dynamics (e.g., a wealth process and a token price process). 
A few papers study blockchain technology adoption through the lens of mean field game theory.
\cite{li2019mean} studies the behavior of the mean field game equilibrium in mining games, in which nodes compete against each other for block rewards through the investment of computational power (hashrate). Under this model, nodes are characterized by a one-dimensional wealth state, and they may choose to invest in hashrate to maximize their expected utility within a fixed time horizon. We extend this framework by introducing an OU-process token price model that is driven by an aggregate quantity (total hashrate). We also relax the time horizon, making it infinite, by allowing the blockchain to adjust the jump intensity of the block arrival process by the total hashrate in real time. As a result, the wealth process in our mean field game is a non-homogenous compound poisson process. Thus, we need to extend the dimension of the state space, since the jump size depends on the token price model we proposed.
In contrast to \cite{li2019mean}, we choose a marginal utility that depends on the control variable, which is also investigated in \cite{bertucci2020mean}. \citeauthor{bertucci2020mean} uses the solution of a master equation to characterize an equilibrium of the mining game for one individual unit of computing power devoted to mining for the blockchain. This master equation approach is robust when small changes are made in modeling assumptions: it preserves the essential structure of the game across all enrichment of various models. By contrast, we take a different approach by focusing on the Hamilton-Jacobi-Bellman equation, which is integral to the solution of the master equation with respect to the discounted total hashrate \citep{bertucci2020mean}. Our focus allows us to introduce aggregate quantities (the total hashrate) from microscopic decisions (investing in computational powers). Our approach also has the advantage of using the forward-backward system with both the Hamilton-Jacobi-Bellman and the Fokker-Planck equation, which is already well developed in the mathematical literature. Further, the value function for the optimal control problem and the evolution of density function has a nice real-world interpretation.

Our work contributes to the consensus protocol design of the core blockchain mechanism. 
First, we build a more realistic framework, using Proof-of-Work (PoW) in a Bitcoin blockchain as an illustrative example. 
Then, we discuss the security of the blockchain design that employs a mining incentive at the equilibrium. 
We approach the intrinsic mechanism design problem by implementing a mean field game equilibrium. 
From the resulting numerical solution, we can predict long-run steady states at an infinite time horizon as well as short-run evolutionary dynamics. 
We show that the game arising from the PoW mechanism leads to an equilibrium and the consensus protocol is well-posed. 
Consequently, we use this idea to analyze and update the rules for the existing consensus protocol scheme in real-world situations, as well as design a new consensus protocol scheme.

The remainder of this paper is organized as follows.
In \Cref{section_1}, we introduce a basic model formulation of the blockchain mechanism and raise a consensus protocol design problem. 
\Cref{section_2} uses a mean field approximation to analyze the behavior of the equilibrium and develop an incentive scheme for the consensus protocol design problem introduced in \Cref{section_1}. 
\Cref{section_4} presents the numerical results and discussion.
The last section concludes with some remarks on possible future directions.

\section{Mean Field Game Analysis}\label{section_2}

In a mean field game, one can describe population behavior as a distribution over the state space rather than in terms of specific node's identification. 
Each node optimizes its payoff by solving a dynamic programming problem, 
assuming the distribution of nodes is specified; 
on the other hand, if every node uses its optimal control, we can better 
infer the distribution of nodes. 
Overall, if we start with a distribution and compute the best response strategy for every node, we will obtain a new stationary distribution in population states. 
The fixed point of this mapping is called a Mean Field Equilibrium (MFE). 
In this section, we give a formal definition for an MFE in the PoW protocol, 
and then 
demonstrate the existence of the MFE in next section.

First,
consider a histogram that counts the number of nodes which fall under each of the disjoint initial wealth-token price categories. 
Let $m (t, x, b)$ be such a density function, denoting the fraction of nodes whose state is $(x, b) \in\R^2$ at time $t$, i.e.,
\begin{equation}\label{densitb_def} 
m (t, x, b) \approx  \frac{1}{M(t)} \sum _{i }\1_{\big\{x_{i }(t) =x, ~b_{j}(t) =b \big\}} \,.
\end{equation}
\autoref{densitb_def} can then be interpolated by a continuous curve of smooth frequencies over the bins of the histogram, normalizing it to $1$. 
The notion of a continuum of nodes captures scenarios in which the interaction between nodes is a probability distribution of random variables that fall within a particular range of values in their state space, so that each node is labeled by its two-dimensional wealth-token price state, which is independent of the node's identity. 
 Such mathematical treatment is widely used for exploring the collective behavior of interacting agents in physics and biology \citep{bolleystochastic, liu2021investigating, liu2020rigorous}.
It also has a wide range of application in mathematical finance
\citep{casgrain2020mean, mkiramweni2019mean, nie2018fair}.
 For ease of notation, we omit dependency on subscripts $i$ and $j$ for the remainder of the paper.

\begin{game}[Mean Field Game]\label{general_structure}
The distribution of nodes is now defined as a continuously differentiable density function, $m(t, x, b)$, which satisfies
\begin{equation}\label{eqn:initial_density}
\int_{X} \int_{B} m(t, x, b) \diff x \diff b= 1 \quad \forall t \in [0,\infty)\,,
\end{equation}
subject to an initial condition $m(0, x, b)$.
Each node is labeled by the two-dimensional wealth state $x \in X$ and the token price state $b \in Y$, 
with boundary conditions at $\min(X)=0$ and $\min(Y)=0$ for nonnegative wealth-token price state space constraints.

\begin{enumerate}[label=(\Roman*), font=\upshape] 

\item \label{discounted_hashrate}
Each node invests in the hashrate to participate in the mining game. 
Because it is cheaper to buy the same computational power over time, a node can acquire more hashes per second for the same cost. 
Here we focus on the feedback control process $\alpha_{t} (x, b)$,
the hashrate discounted by the rate of the technological progress at time $t$ (i.e., the notion of real hashrate as found in \cite{bertucci2020mean}). 
Let the set of the admissible Markovian feedback control process be denoted by
 \begin{equation}\label{control}
\left\{\alpha_{t} (x, b): [0,\infty) \times X \times Y \rightarrow \mathbb{R}  \mid \alpha_{t} \text { is adapted to the filtration }\left\{\mathcal{F}_{t}\right\}_{t \geq t'}\right\}
\end{equation}

\item 
Each node aims to maximize its expected total utility over the infinite-time horizon $[0,\infty)$ 
\begin{equation}
\label{expected_total_utility}
\max_{\alpha_{t}}\E\left[\int_0^{\infty} e^{-{{r}} t} u(t, x_{t}, b_t, \alpha_{t}) \diff t \, \Big| \, x(0) = x_{0} , b(0)=b_0\right] \,,
\end{equation}
where
$r > 0$ is an instantaneous interest rate,
and the marginal utility function is assumed to be strictly concave with the control variable
\begin{equation}
\label{marginal_utility}
{u}: [0,\infty) \times X \times Y \times \R \rightarrow \mathbb{R} \,.
\end{equation}
For example (see \Cref{section_4}), it may take the following form 
\[
u(\alpha_t) = \theta_1 \log (\alpha_t + \theta_2) + \theta_3  -   c {\alpha_{t}}  \,,
\]
which is the node's revenue minus the total expenditure of the hardware and electricity cost.
\item The dynamics of states evolve in a controlled Markovian fashion:
the {\bf wealth} state $x$ changes due to mining rewards and the expense of mining, i.e.,
\begin{equation}
\label{wealth_process_for_individual_node}
\diff x_t = \Big( r x_t - c(\alpha_{t})  \Big) \diff t +{k_t b_t}   \diff N_t \,,
\end{equation}
where $c({\alpha_t})>0$ represents the cost
to compute hashes in new generations of a mining machine in terms of dollars.
The {\bf mining reward for a node} depends on
 the total rewards given to the network of nodes;
as well as
the node's probability of receiving its mining reward $\frac{\alpha_{t}}{h_t}$~, which is proportional to the ratio of the hashrate it invests in as a fraction of the total network hashrate.
Thus, it can be modeled by a non-homogenous compound Poisson process $N_t \sim \text{Poisson}({\frac{\lambda_{t}}{h_t}\alpha_t})$,
such that  
\begin{itemize}

\item the {\bf rate at which a network of nodes finds blocks} ${{\lambda}}(t)$ is given in \autoref{expected_block_arrival_rate};

\item the {\bf  number of tokens in each block} $k_t$ is given in \autoref{number_of_block_reward}.
\item the {\bf value of each token} $b_t$ is modeled as an OU-process in Assumption \autoref{assumption_token_price}
\begin{equation}
\label{reward_process_for_individual_node}
\diff b_t =  (\hat b_t - b_t) \diff t +\sigma \diff W_t \,,
\end{equation}
where $W_{t}$ is the standard Wiener process, and the cost of a token production 
$
\hat b_{t}= \beta \frac{c(h_t)}{K_t} 
$.

\end{itemize}

\item 
In contrast to the mean field game model in the current literature \citep{li2019mean}, 
our model has nodes whose states dynamics are coupled 
through the mean optimal control $\bar \alpha_{t}$ in the total hashrate term 
\begin{equation}\label{total_hashrate}
{h}_t := {M(t)} \bar \alpha_t
\qquad \text{where} \qquad
{\bar \alpha_t} = \int_{X} \int_{B} \hat \alpha(t, x, b) m(t, x, b) \diff x \diff b \,.
\end{equation}
Here, ${m(t, x, b)}$ denotes the wealth-token price distribution when every node uses the optimal control $\hat \alpha(t, x, b)$ at time $t$. Note that total hashrate term has appeared in \autoref{token_price} and \autoref{expected_block_arrival_rate}.

\end{enumerate}
\end{game}

\subsection{Infinitesimal Generator}

\begin{lemma}\label{lemma_IVP}
For a fixed $\alpha_t$,
there exists a unique, right-continuous and left-limit F-adapted solution to the 
stochastic differential equation for the jump-diffusion states process
\begin{equation}\label{system}
\diff \left[ \begin{array}{c} x_t \\ b_t  \end{array} \right] 
= \left[ \begin{array}{c} r x_t - c(\alpha_{t}) \\ \hat b_t - b_t  \end{array} \right] \diff t  
+ \left[ \begin{array}{c} {k_t b_t}   \\ 0  \end{array} \right]  \diff N_{t} 
+ \left[ \begin{array}{c} 0 \\ \sigma  \end{array} \right]  \diff W_{t} \, \quad
\end{equation}
with a random initial value $\big(0, x(0),b(0)\big) = \big(0, x_0, b_0\big) $.
\end{lemma}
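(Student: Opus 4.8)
The plan is to exploit the triangular (lower-block-triangular) structure of the system \eqref{system}: the $b$-component evolves autonomously, so I would solve it first and then substitute the resulting path into the $x$-equation, which for a fixed control $\alpha_t$ is linear in $x$ with additive driving noise. Throughout, the aggregate quantities $\hat b_t=\beta c(h_t)/K_t$, the reward size $k_t$, and the block-arrival intensity $\lambda_t$ (hence $N_t$) enter only as exogenous data; by their construction in \autoref{number_of_block_reward}, \autoref{number_of_cumulated_tokens}, \autoref{expected_block_arrival_rate} and Assumption \ref{assumption_token_price} they are $\mathbb{F}$-adapted and, on every bounded interval $[0,T]$, locally bounded (in particular $k_t$ is bounded, $K_t$ is bounded below away from zero, $c$ is continuous, and the integrated intensity $\int_0^T\lambda(s)\,\diff s<\infty$, so $N$ has only finitely many jumps on $[0,T]$ a.s.), while $c(\alpha_t)$ and the initial data $(x_0,b_0)$ are $\mathcal{F}_0$-measurable up to a $\Prob$-null set.

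\emph{Step 1: the $b$-equation.} Since $\diff b_t=(\hat b_t-b_t)\,\diff t+\sigma\,\diff W_t$ is a linear Ornstein--Uhlenbeck-type SDE driven only by the Wiener process with adapted, locally bounded inhomogeneity, the integrating factor $e^{t}$ yields the explicit representation
\[
b_t = e^{-t}b_0 + \int_0^t e^{-(t-s)}\hat b_s\,\diff s + \sigma\int_0^t e^{-(t-s)}\,\diff W_s ,
\]
which is continuous, $\mathbb{F}$-adapted, and unique: the difference of two solutions solves the deterministic linear ODE $\dot y=-y$, $y(0)=0$.

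\emph{Step 2: the $x$-equation by interlacing.} With $b_t$ now a fixed right-continuous, left-limited adapted process, I would build $x_t$ pathwise along the jump times $0=\tau_0<\tau_1<\tau_2<\cdots$ of $N$: on each interval $[\tau_j,\tau_{j+1})$ the process $N$ is constant, so $x$ solves the linear ODE $\dot x_t=rx_t-c(\alpha_t)$, with unique absolutely continuous solution $x_t=e^{r(t-\tau_j)}x_{\tau_j}-\int_{\tau_j}^t e^{r(t-s)}c(\alpha_s)\,\diff s$; at each jump one sets $x_{\tau_{j+1}}=x_{\tau_{j+1}-}+k_{\tau_{j+1}-}b_{\tau_{j+1}-}$. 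Because only finitely many $\tau_j$ lie in $[0,T]$, this produces a unique right-continuous, left-limited, $\mathbb{F}$-adapted solution on $[0,T]$, and letting $T\to\infty$ gives one on $[0,\infty)$. Equivalently, the closed form $x_t=e^{rt}x_0-\int_0^t e^{r(t-s)}c(\alpha_s)\,\diff s+\int_0^t e^{r(t-s)}k_{s-}b_{s-}\,\diff N_s$ can be verified directly via the Itô formula for semimartingales, and uniqueness then follows from a Gronwall estimate on the difference of two solutions.

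\emph{Alternative packaging and main obstacle.} The conclusion also follows from the standard existence--uniqueness theorem for jump--diffusion SDEs \citep{kunita1997stochastic}: writing $Z_t=(x_t,b_t)^\top$, the drift $Z\mapsto(rx-c(\alpha_t),\hat b_t-b)^\top$ is affine and the diffusion and jump coefficients $Z\mapsto(0,\sigma)^\top$ and $Z\mapsto(k_tb,0)^\top$ are constant and linear, so all coefficients are Lipschitz in $Z$ with an adapted, locally bounded constant ($\max\{r,1\}$ for the drift, $|k_t|$ for the jump term) and obey the linear growth bound; the theorem then yields a unique $\mathbb{F}$-adapted, right-continuous, left-limited solution locally, which is extended to $[0,\infty)$ by localization. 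I expect the only genuinely delicate points to be (i) the multiplicative jump term $k_tb_t\,\diff N_t$, which the interlacing construction handles cleanly precisely because $N$ has finitely many jumps on bounded intervals so no explosion can occur between them, and (ii) confirming the mild regularity — adaptedness and local boundedness — of the exogenous aggregates $\hat b_t$, $k_t$, $\lambda_t$, which is where the constructions of \Cref{example_POW} and Assumption \ref{assumption_token_price} are invoked.
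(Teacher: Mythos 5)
Your proposal is correct, and it does more work than the paper itself: the paper's entire ``proof'' of this lemma is a citation to Theorem~4 of \cite{gaviraghi2017theoretical}, a general existence--uniqueness theorem for jump--diffusion SDEs under Lipschitz and linear-growth hypotheses. That cited route is exactly what your final ``alternative packaging'' paragraph reconstructs (affine drift, constant diffusion, linear jump coefficient, localization), so on that branch you and the paper coincide. Your primary argument, however, is genuinely different and more elementary: it exploits the lower-triangular structure to solve the autonomous OU equation for $b$ explicitly and then builds $x$ pathwise by interlacing along the finitely many jump times of $N$ on each bounded interval. This buys explicit variation-of-constants representations for both components and makes the uniqueness argument essentially deterministic (a linear ODE plus Gronwall between jumps), at the cost of having to check by hand the adaptedness and local boundedness of the exogenous aggregates $\hat b_t$, $k_t$, $\lambda_t$ --- a point you correctly flag and which the blanket citation sweeps into its hypotheses. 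One small detail worth keeping as you wrote it: since $k_t$ depends on $N_t$ itself, the jump size at $\tau_{j+1}$ must use the left limit $k_{\tau_{j+1}-}b_{\tau_{j+1}-}$; your interlacing construction handles this automatically because $N_{\tau_j}$ is known on each inter-jump interval, whereas a careless reading of the differential notation $k_t b_t\,\diff N_t$ could obscure it.
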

\begin{proof}
See Theorem 4 \citep{gaviraghi2017theoretical}.
\end{proof}
%Moreover, for each $f \in \mathcal{D}(\mathcal{A}), x \in \mathbb{R}^{d}, t \geq 0$, we define the following mapping
%$$
%\begin{array}{r}
%u: \mathbb{R}^{d} \times \mathbb{R}^{+} \rightarrow \mathbb{R} \\
%(x, t) \mapsto u(x, t):=\left(\mathcal{T}_{t} f\right)(x)
%\end{array}
%$$
%Moreover, for each $f \in \mathcal{D}(\mathcal{A}), x \in \mathbb{R}^{d}, t \geq 0$, we define the following mapping
%$$
%\begin{aligned}
%\upsilon: \mathbb{R}^{+} \times \mathbb{R}^{2} & \rightarrow \mathbb{R} \\
%(t, x, b) & \mapsto \upsilon(t, x, b):=\left(T_{t} f\right)(x, b)
%\end{aligned}
%$$
%From (2.10), it is possible to state the following initial-value problem on $\mathbb{R}^{d} \times \mathbb{R}^{+}$
%$$
%\left\{\begin{array}{l}
%\frac{\partial \upsilon(x, t)}{\partial t}=\mathcal{A} \upsilon(x, t) \\
%\upsilon(x, 0)=f(x)
%\end{array}\right.
%$$
Denote $\displaystyle x_{s^{-}}:=\lim_{s' \nearrow s} x_{s'}$, 
under the assumption that the jumps are instantaneous,
the differential Poisson $\diff N_t$ conditions the existence of a block arrival,
which behaves asymptotically for small $\lambda \diff t$ in the interval $(t, t+\diff t]$, i.e.,
\[
\Prob \{\diff N_t =n\}=\left\{\begin{array}{ll}
1-{\frac{\lambda_{t}}{h_t}\alpha_t} \diff t, & n=0 \\
{\frac{\lambda_{t}}{h_t}\alpha_t} \diff t, & n=1 \\
0, & n>1
\end{array}\right\}+\mathrm{O}^{2}\left(\diff t \right)\,.
\]
Thus, the change of a testing function $\upsilon\left(t, x_t, b_t \right)$ of the jump-diffusion states process \ref{system}, 
$\Delta \upsilon\left(t, x_t, b_t \right)$, can be decomposed into the sum of continuous and discontinuous changes,
which gives us
{\scriptsize
\begin{eqnarray*}
\Delta \upsilon\left(t, x_t, b_t \right) 
&=&
 \upsilon\left(t+\Delta t, x_{t+\Delta t}, b_{t+\Delta t} \right)   -   \upsilon\left(t, x_t, b_t \right) \\
&=&
 \left[{\frac{\partial \upsilon\left(t, x_t, b_t \right)}{\partial t}}
+\Big( r x_t - c(\alpha_{t})  \Big) \frac{\partial }{\partial x_t} \upsilon\left(t, x_t, b_t \right)    
+(\hat b_t - b_t) \frac{\partial }{\partial b_t} \upsilon\left(t, x_t, b_t \right) + \frac{\sigma^2}{2}  \frac{\partial^2 }{\partial b_t ^2} \upsilon\left(t, x_t, b_t \right) 
\right]  \Delta t \\
&+&  \sigma \frac{\partial }{\partial b_t } \upsilon\left(t, x_t, b_t \right) \Delta W_t 
+ {\frac{\lambda_{t}}{h_t}\alpha (t, x_t, b_t) }\int_{\mathbb{R}}\Big( \upsilon\left(t, x_{t^{-}}+z , b_t\right)-\upsilon\left(t, x_{t^{-}}, b_t\right) \Big) \delta(z-  {k_t b_t} ) \diff z \Delta t
\,.
\end{eqnarray*}
}Taking the expectation $\E[\cdot]$ over wealth and token price,
dividing them by $\Delta t$ and taking the limit $\Delta t \to 0$,
we obtain a Cauchy problem
{\scriptsize
\begin{eqnarray}\label{the Cauchy problem}
 \frac{\diff}{\diff t} \E \left[\upsilon\left(t, x_t, b_t \right) \right] &=& 
\lim_{\Delta t \to 0}\frac{ 
\E \left[ \upsilon\left(t+\Delta t, x_{t+\Delta t}, b_{t+\Delta t} \right) \right]  - \E \left[ \upsilon\left(t, x_t, b_t \right) \right] 
 }{\Delta t} \nonumber  \\
&=&   \left[ {\frac{\partial \upsilon\left(t, x_t, b_t \right)}{\partial t}} 
+\Big( r x_t - c(\alpha_{t})  \Big) \frac{\partial }{\partial x_t} \upsilon\left(t, x_t, b_t \right)    
+(\hat b_t - b_t) \frac{\partial }{\partial b_t} \upsilon\left(t, x_t, b_t \right) + \frac{\sigma^2}{2}  \frac{\partial^2 }{\partial b_t ^2} \upsilon\left(t, x_t, b_t \right) \right]
\nonumber  \\
&& +{\frac{\lambda_{t}}{h_t}\alpha (t, x_t, b_t) }\Big( \int_{\mathbb{R}} \upsilon\left(t, x_{t^{-}}+z, b_{t} \right) \delta(z- {k_t b_t} ) \diff z  -\upsilon\left(t, x_{t^{-}}, b_{t}\right)  \Big) 
\end{eqnarray}
}with an initial condition 
$\upsilon(0,x(0),b(0))=\upsilon(0, x_0, b_0)$.

Let the evolution operators $\left\{\mathcal{T}_{t}\right\}_{t \in [0,\infty)}$ denote the family of flows associated with stochastic process \ref{system}.
By the existence and uniqueness of the solution to the initial-value problem \ref{system}, as demonstrated in Lemma \autoref{lemma_IVP},
the family of flows $\left\{\mathcal{T}_{t}\right\}_{t \in [0,\infty)}$ constitutes semi-group properties \citep{kunita1997stochastic},
allowing us to define the infinitesimal generator $\mathcal{A}$ of the semigroup $\left\{\mathcal{T}_{t}\right\}_{t \in [0,\infty)}$ as 
\begin{equation}\label{operator_A_def}
\mathcal{A} \upsilon := \lim _{t \rightarrow 0} \frac{\mathcal{T}_{t} \upsilon -\upsilon }{t} \,.
\end{equation}
Thus, the infinitesimal generator $\mathcal{A}$ of the flows defined in \autoref{operator_A_def} takes the following form:
\begin{eqnarray}\label{generator_L}
\mathcal{A} \upsilon \left(t,  x, b\right) 
&:= & 
\Big( r x - c(\alpha_{t})  \Big) \frac{\partial }{\partial x} \upsilon\left(t, x, b \right)    
+(\hat b_t - b) \frac{\partial }{\partial b} \upsilon\left(t, x, b \right) + \frac{\sigma^2}{2}  \frac{\partial^2 }{\partial b ^2} \upsilon\left(t, x, b \right)
 \nonumber \\
&&
+{\frac{\lambda_{t}}{h_t}\alpha (t, x, b) } \left(\int_{\mathbb{R}} \upsilon\left(t, x, z \right) \delta(z-x - {k_t b}) \diff z  -\upsilon\left(t,  x, b\right)  \right)
\end{eqnarray}
Let $m(t, x_t, b_t)$ be the density function at time $t$, and denote with $\langle\cdot, \cdot\rangle \text { the } L^{2} $-inner product.
The adjoint operator $\mathcal{A^*} $ is defined as 
\begin{equation}\label{adjoint_def}
\langle \mathcal{A} \upsilon, m \rangle=\left\langle \upsilon, \mathcal{A}^{*} m \right\rangle 
\end{equation}
The left hand side of \autoref{the Cauchy problem} gives
\begin{equation}\label{the Cauchy problem LHS}
\frac{\diff}{\diff t} \mathbb{E}\Big[\upsilon\left(t, x_t, b_t \right)\Big] 
=\int_{X} \int_{B}  \frac{\partial \upsilon(t, x_t, b_t)}{\partial t}  m(t, x_t, b_t) \diff b \diff x 
~+ \int_{X} \int_{B}   \frac{\partial m(t, x_t, b_t)}{\partial t}  \upsilon(t, x_t, b_t) \diff b \diff x \,,
\end{equation}
while the right hand side of \autoref{the Cauchy problem} is
{\scriptsize
\begin{eqnarray}\label{the Cauchy problem RHS}
 \frac{\diff}{\diff t} \E \left[\upsilon\left(t, x_t, b_t \right) \right] &=& 
 \int_{X} \int_{B} m(t, x_t, b_t) \left[ {\frac{\partial \upsilon\left(t, x_t, b_t \right)}{\partial t}} 
+\Big( r x_t - c(\alpha_{t})  \Big) \frac{\partial }{\partial x_t} \upsilon\left(t, x_t, b_t \right)    
+(\hat b_t - b_t) \frac{\partial }{\partial b_t} \upsilon\left(t, x_t, b_t \right) 
\right.
\nonumber  \\
&+& \left.
 \frac{\sigma^2}{2}  \frac{\partial^2 }{\partial b_t ^2} \upsilon\left(t, x_t, b_t \right) 
+{\frac{\lambda_{t}}{h_t}\alpha (t, x_t, b_t) }\Big( \int_{\mathbb{R}} \upsilon\left(t, x_{t^{-}}+z, b_{t} \right) \delta(z- {k_t b_t} ) \diff z  -\upsilon\left(t, x_{t^{-}}, b_{t}\right)  \Big) 
\right]
\diff b \diff x \,.
\nonumber  \\
\end{eqnarray}
}Combining the results of Equations \ref{the Cauchy problem LHS} and \ref{the Cauchy problem RHS}, we obtain
{\scriptsize
\begin{eqnarray}\label{the Cauchy problem new}
 \int_{X} \int_{B}   \frac{\partial m(t, x_t, b_t)}{\partial t}  \upsilon(t, x_t, b_t) \diff b \diff x
= 
 \displaystyle  \int_{X} \int_{B}  m(t, x_t, b_t)   \left(
  \Big( r x_t - c(\alpha_{t})  \Big) \frac{\partial }{\partial x_t} \upsilon\left(t, x_t, b_t \right)    
+(\hat b_t - b_t) \frac{\partial }{\partial b_t} \upsilon\left(t, x_t, b_t \right) 
   \right. \nonumber \\
 \left. 
 + \frac{\sigma^2}{2}  \frac{\partial^2 }{\partial b_t ^2} \upsilon\left(t, x_t, b_t \right)
 +  {\frac{\lambda_{t}}{h_t}\alpha (t, x_t, b_t) }\left(\int_{\mathbb{R}} \upsilon\left(t, x_{t^-}+z, b_t \right) \delta(z- {k_t b_t} ) \diff z  -\upsilon\left(t, x_{t^-}, b_t\right)  \right)
\right) \diff b \diff x    \,.
\end{eqnarray}
}When applying integration by parts to \autoref{the Cauchy problem new} \citep{gaviraghi2017theoretical},
$\mathcal{A^*} $ 
takes the following form:
\begin{eqnarray}\label{generator_L_star}
\mathcal{A^*} m \left(t,  x, b\right) 
&:=& 
 - \frac{\partial }{\partial x} \Big[\Big( r x - c(\alpha_{t})  \Big)  m\left(t,  x, b \right) \Big]
- \frac{\partial }{\partial b} \Big[ \Big(\hat b_t - b\Big)  m\left(t,  x, b \right) \Big] 
+ \frac{\sigma^2}{2}  \frac{\partial^2 }{\partial b ^2}  m \left(t, x, b \right)
 \nonumber \\
&&
\int_{\mathbb{R}}  {\frac{\lambda_{t}}{h_t}\alpha (t, x, z) } m\left(t, x, z\right)  \delta(z - x + {k_t b}) \diff z - {\frac{\lambda_{t}}{h_t}\alpha (t, x, b) }m\left(t,  x, b \right) 
\,.
\end{eqnarray}
The decision of nodes and the evolution of the joint distribution of their wealth-token price process can be summarized using infinitesimal generators. 
Thus, we can use these generators to derive a Hamilton-Jacobi-Bellman equation and a Fokker-Planck equation.

\subsection{Hamilton-Jacobi-Bellman Equation}

In this subsection, we aim to define an operator to capture the phenomenon of each node solving stochastic control problem \ref{expected_total_utility}, subject to the underlying controlled jump-diffusion dynamics \ref{system} for a given population distribution.

We can achieve this by applying Bellman's principle of optimality and then working out the optimal control \ref{control} backwards in time.
To do so, define the value function
\begin{equation}\label{value_function}
v(0, x_0, b_0) := \sup_{\alpha_{t}}\E\left[\int_0^{\infty} e^{- {{r}} t} u(t, x_{t}, b_t, \alpha_{t}) \diff t \, \Big| \, x(0) = x_{0} , b(0)=b_0\right] \,.
\end{equation}
Inspired by the lemma proposed by \cite{li2019mean}, which states that having more wealth gives nodes greater flexibility to choose their hashrates, we have made the following assumption.
\begin{assumption}\label{control_increasing_in_wealth}
Fix a choice of ${\alpha}\in O (t) $,  
the value function $ v(t, x , y) $ is finite and strictly increasing in wealth $x, \forall t \in\left[0, \infty \right)$. 
 \end{assumption} 
The value function defined in \autoref{value_function} satisfies Bellman's principle of optimality
\begin{equation}\label{discrete_time_Bellman_optimality}
v\left(t, x(t), b(t) \right) = \sup _{\alpha_t}\Big\{ u(t, x, b, \alpha )  \Delta t +  e^{-{{r}} \Delta t} ~\E \left[ v\left(t+\Delta t , x_{t+\Delta t}, b_{t+\Delta t}\right) \right]\Big\} \,,
\end{equation}
where $\E[\cdot]$ is the expectation over wealth $x({t+\Delta t})$ and token price $b(t+\Delta t)$.

First, we substitute $e^{-{{r}} \Delta t}  = 1- {{r}} \Delta t +\mathcal{O}(\Delta t^2)$ into  \autoref{discrete_time_Bellman_optimality}
\begin{equation}
v\left(t, x(t), b(t)\right)  = \sup _{\alpha_t}\Big\{ u(t, x, b,\alpha )  \Delta t +  (1-{{r}} \Delta t) ~\E \left[ v\left(t + \Delta t, x_{t+\Delta t}, b_{t+\Delta t}\right) \right] \Big\} + \mathcal{O}(\Delta t^2) \,,
\end{equation}
and subtract $(1- {{r}} \Delta t)v(t, x(t), b(t))$ from both sides to obtain
{\scriptsize
\begin{equation}\label{discrete_time_Bellman_optimality_1}
{{r}} v \left(t, x(t), b(t)\right) \Delta t  = \sup _{\alpha_t}\Big\{ u(t, x, b,\alpha )  \Delta t +  (1-{{r}} \Delta t) ~\E \big[v\left(t + \Delta t, x_{t+\Delta t}, b_{t+\Delta t}\right) - v\left(t, x(t), b(t)\right)\big] \Big\} + \mathcal{O}(\Delta t^2)
\end{equation}}Then, dividing the above expression by $\Delta t$, we find
{\footnotesize
\begin{equation}\label{discrete_time_Bellman_optimality_2}
{{r}} v \left(t, x(t), b(t)\right)  = \sup _{\alpha_t}\Big\{ u(t, x, b,\alpha ) +  (1-{{r}} \Delta t) ~\E \Big[\frac{ v\left(t + \Delta t, x_{t+\Delta t}, b_{t+\Delta t}\right) -  v\left(t, x(t), b(t)\right)}{ \Delta t}\Big] \Big\} + \mathcal{O}(\Delta t)
\end{equation}}Taking the limit $\Delta t \rightarrow 0$,
\autoref{discrete_time_Bellman_optimality_2} yields
\begin{equation}\label{discrete_time_Bellman_optimality_3}
{{r}} v \left(t, x(t), b(t)\right)  = \sup _{\alpha_t}\Big\{ u(t, x, b,\alpha )  +  \frac{\diff}{\diff t}  \E \left[  v\left(t, x(t), b(t)\right)\right] \Big\} 
\end{equation}
Using \autoref{the Cauchy problem}, 
we obtain the Hamilton-Jacobi-Bellman equation:
\begin{eqnarray}\label{HJB}
{{r}} v \left(t, x, b \right) &= &\sup_{\alpha} \left\{ u(t, x, b,\alpha )
+\Big( r x - c(\alpha_{t})  \Big) \frac{\partial }{\partial x} v\left(t, x, b \right) \right.
\nonumber \\
&&~~~~~~~~\left. +\frac{\lambda_{t}}{h_t}\alpha (t, x, b) \left(\int_{\mathbb{R}} v\left(t, x, z \right) \delta(z-x - {k_t b}) \diff z  -v\left(t, x, b\right)  \right)\right\}
\nonumber \\
 &&~~~~~~~~+(\hat b_t - b) \frac{\partial }{\partial b} v\left(t, x, b \right) + \frac{\sigma^2}{2}  \frac{\partial^2 }{\partial b ^2} v\left(t, x, b \right)
+{\frac{\partial v\left(t, x, b \right)}{\partial t}} 
\end{eqnarray}
The optimal control $\hat \alpha_t$ maximizes the following objective:
\begin{equation}\label{optimal_control_maximizes_following_objective}
u(t, x, b,\alpha_t )
+\Big( r x - c(\alpha_{t})  \Big) \frac{\partial }{\partial x} v\left(t, x, b \right) 
+ \frac{\lambda_{t}}{h_t}\alpha_t \int_{\mathbb{R}}\Big( v\left(t, x, z\right)-v\left(t, x, b\right) \Big) \delta(z-x - {k_t b}) \diff z \,.
\end{equation}
Thus, \autoref{optimal_control_maximizes_following_objective} can be used to recover optimal control.
Since the node's expenditure has to be less than or equal to its wealth, we can refine the set of admissible Markovian feedback control in \autoref{control} as 
\begin{eqnarray}\label{outcome}
O(t) := \left\{ \hat \alpha_{t} \geq 0 ~\Big| ~
\hat \alpha_t =
\arg\max \Big\{ u(t, x, b,\alpha_t )
+\Big( r x - c(\alpha_{t})  \Big) \frac{\partial }{\partial x} v\left(t, x, b \right) \qquad\qquad\qquad\quad
\right. \nonumber  \\
+ \left. \frac{\lambda_{t}}{h_t}\alpha_t \int_{\mathbb{R}}\Big( v\left(t, x, z \right)\delta(z-x - {k_t b}) \diff z -v\left(t, x, b\right) \Big) \Big\} \right\} \,.
\end{eqnarray}
In particular, the active nodes comprise the set 
\begin{equation}\label{optimal_control}
O^*(t) := \left\{ \hat \alpha_{t} >0 ~\Big| ~
\hat \alpha_t \in O(t) \right\} \,.
\end{equation}
\begin{assumption}[Boundary Conditions]
Assuming that the token price process is reflected at the line $b = 0$ and $b = \max (Y)$,
one can show that this gives rise to the following boundary conditions for $v$ in $b$-direction:
\begin{equation}\label{v_b_BC}
\frac{\partial v(t, x, b)}{\partial {b}} \Big |_{b = 0}=\frac{\partial v(t, x, b)}{\partial {b}} \Big |_{b = \max (Y)} = 0 \,.
\end{equation}
Let $\max (X)$ be large enough to obtain
\begin{equation}\label{v_xmax_BC}
\frac{\partial v(x,b)}{\partial {x}} \Big |_{x = \max (X)}  = 0 \,.
\end{equation}
Since nodes cease their mining if their wealth
hits $0$,
so that the boundary condition with respect to the wealth dimension at $x=0$ is chosen to be
\begin{equation}\label{x_BC}
c({\alpha_{t}} ) |_{x=0}= 0 \,.
\end{equation}
\end{assumption}
\begin{remark}
The viscosity solutions of Hamilton-Jacobi equations with such boundary conditions have been studied in~\cite{qiu2018viscosity}. 
Here, it is convenient to introduce the Hamiltonian for \autoref{HJB}
{\scriptsize
\begin{equation}
H(x, b,p) := \sup_{\alpha} \left\{  u(t, x, b,\alpha ) + \Big( r x  - c(\alpha_{t}) \Big) p
+ \frac{\lambda_{t}}{h_t}\alpha_t \int_{\mathbb{R}}\Big( v\left(t, x, z \right)\delta(z-x - {k_t b}) \diff z -v\left(t, x, b\right) \Big) \Big\} \right\} \,.
\end{equation}
}The non-decreasing and non-increasing envelopes $H^{\uparrow}$ and $H^{\downarrow}$ of $p \mapsto H(x, b,p)$ can be defined as
{\scriptsize
\begin{equation*}
  \begin{split}
H^\uparrow (x, b,p)&=  \sup_{0\leq c(\alpha)\leq rx} \left\{  u(t, x, b,\alpha ) + \Big( r x  - c(\alpha_{t}) \Big) p
+ \frac{\lambda_{t}}{h_t}\alpha_t \int_{\mathbb{R}}\Big( v\left(t, x, z \right)\delta(z-x - {k_t b}) \diff z -v\left(t, x, b\right) \Big) \Big\} \right\} \, ,\\
H^\downarrow (x, b,p)&=  \sup_{\max(0, rx)\leq c(\alpha)} \left\{  u(t, x, b,\alpha ) + \Big( r x  - c(\alpha_{t}) \Big) p
+ \frac{\lambda_{t}}{h_t}\alpha_t \int_{\mathbb{R}}\Big( v\left(t, x, z \right)\delta(z-x - {k_t b}) \diff z -v\left(t, x, b\right) \Big) \Big\} \right\} \,. 
  \end{split}
\end{equation*}
}Similar to the method as presented in \cite{achdou2020mean}, it can be seen that 
\begin{displaymath}
  H(x, b,p)=H^\uparrow (x, b,p)+H^\downarrow (x, b,p)- \min_{p\in \R} H(x, b,p).
\end{displaymath}
 Thus, the {boundary condition} for the value function associated with the state constraint satisfies
{\scriptsize
\begin{equation}\label{HJB_BC}
 \hspace{-1cm}
{{r}} v \left(t, 0, b \right) = \sup_{\alpha} \left\{ u(t, 0, b,\alpha )
+ H^{\uparrow}(0,y,\partial_x v |_{x=0}) \right\}
+(\hat b_t - b) \frac{\partial }{\partial b} v\left(t, 0, b \right) + \frac{\sigma^2}{2}  \frac{\partial^2 }{\partial b ^2} v\left(t, 0, b \right)
+{\frac{\partial v\left(t, 0, b \right)}{\partial t}} \,.
\end{equation}
}\vspace{-0.5cm}
\end{remark}
Overall, we can interpret the properties of the solution to the Hamilton-Jacobi-Bellman \autoref{HJB} 
using the operator $\mathfrak{A}$.

\begin{definition}
\label{optimal_control_operator}

Define an optimal control operator 
\[
\mathfrak{A}: {{[0,\infty) \times X \times Y}} \rightarrow [0,\infty) \times O,
\] 
 where
{\footnotesize
\[
 \mathfrak{A}[m](t,x, b) = \Big\{ \hat \alpha(t,x, b)  \in O ~\Big|~ \text{ value function } v \text{ satisfies \autoref{HJB} and \autoref{HJB_BC} for a given distribution } m \Big\},
\]}which captures the postulate that nodes will optimize their running payoffs when the distribution of other nodes is fixed.
\end{definition}

\subsection{Fokker-Planck Equation}

Population behavior can be described through evolutionary distribution of nodes. In this subsection, we aim to define an operator to capture population behavior arising from nodes under optimal control.

First, using \autoref{generator_L_star} and letting $ {\frac{\partial m\left(t, x, b \right)}{\partial t}} = \mathcal{A^*} m \left(t, x, b\right) $, we can get a Fokker-Planck equation, i.e.,
{\footnotesize
\begin{eqnarray}\label{FP}
 {\frac{\partial m\left(t, x, b \right)}{\partial t}} 
&=& - \frac{\partial }{\partial x} \Big[\Big( r x - c(\alpha_{t})  \Big)  m\left(t, x, b \right) \Big]
- \frac{\partial }{\partial b} \Big[ \Big(\hat b_t - b\Big)  m\left(t, x, b \right) \Big] 
+ \frac{\sigma^2}{2}  \frac{\partial^2 }{\partial b ^2}  m \left(t, x, b \right)
 \nonumber \\
&& 
+\int_{\mathbb{R}} \frac{\lambda_{t}}{h_t}\alpha (t, x, z) m\left(t, x, z\right)  \delta(z -x + {k_t b}) \diff z - \frac{\lambda_{t}}{h_t}\alpha (t, x, b)m\left(t, x, b \right) \,.
\end{eqnarray}
}with initial condition
$
m \big(0, x(0),b(0)\big) = m_0 (x_0, b_0) ;
$.
\vspace{0.2cm}
\begin{assumption}[Boundary Conditions]
For $0 \leq x \leq {k_t b}$,
there is no density at $x - {k_t b}$ jumping to $x$.
Thus, density $m\left(t, x, b \right)$ satisfies the following boundary conditions
{\footnotesize
\begin{equation}\label{FP_BC}
 {\frac{\partial m\left(t, x, b \right)}{\partial t}} 
= - \frac{\partial }{\partial x} \Big[\Big( r x - c(\alpha_{t})  \Big)  m\left(t, x, b \right) \Big]
- \frac{\partial }{\partial b} \Big[ \Big(\hat b_t - b\Big)  m\left(t, x, b \right) \Big] 
+ \frac{\sigma^2}{2}  \frac{\partial^2 }{\partial b ^2}  m \left(t, x, b \right)
 - \frac{\lambda_{t}}{h_t}\alpha (t, x, b)m\left(t, x, b \right) \,.
\end{equation}
 }
\end{assumption}
We solve for $m\left(t, x, b \right)$, where the sum of a measure is continuous with respect to a two-dimensional Lebesgue measure on $(0,\infty)\times [0,\infty) $ with a density $\bar m$  and a measure $\eta$  that is supported by the line $\{x=0\} \times  [0, \infty)$:
 \begin{equation}
   \label{eq:macroeco:22}
    \diff m(t, x, b) =  {\bar m}(t, x, b)  \diff x \diff b +  \diff \eta(t,b) \, . \vspace{0.2cm}
 \end{equation}
Thus, \autoref{FP} has a weak form for every test function $\upsilon$:
{\scriptsize
  \begin{equation}
\label{FP_BC}
 \left. \begin{array}[c]{r}
    \displaystyle 
 \int_{x> 0} \int_{b \geq 0} {\bar m}(t,x, b) \left( H_p \Big(x ,b,\partial_x v (t,x, b) \Big)  \partial_x \upsilon(t,x, b)
 +(\hat b_t - b) \frac{\partial }{\partial b} \upsilon\left(t, 0, b \right) + \frac{\sigma^2}{2}  \frac{\partial^2 }{\partial b ^2} \upsilon\left(t, 0, b \right)
   \right)\diff x \diff b 
\\
 + \displaystyle
 \int_{b \geq 0}  \left(H_p^\uparrow \Big(0 ,b,\partial_x v(t, 0_+,b) \Big)  \partial_x \upsilon (t, 0,b) 
 +(\hat b_t - b) \frac{\partial }{\partial b} \upsilon\left(t, 0, b \right) + \frac{\sigma^2}{2}  \frac{\partial^2 }{\partial b ^2} \upsilon\left(t, 0, b \right)
   \right) \diff\eta(b)
  \end{array}
\right\}=0
  \end{equation}
  }

Hence, we can define the operator $\mathfrak{M}$ in a way that corresponds to the solution to the Fokker-Planck  \autoref{FP} and captures the phenomenon that a new distribution arises every time nodes use an optimal strategy.
\begin{definition}\label{distribution_evolution_operator}
Define a distribution evolution operator 
\[
\mathfrak{M} : [0,\infty) \times {O} \times {{X \times Y}} \rightarrow [0,\infty) \times {{X \times Y}}
\] 
where
{
\footnotesize
\[
 \mathfrak{M}[\hat \alpha](t,x, b) = \Big\{ m(t,x, b) ~\Big|~ \text{distribution }  m \text{ satisfies \autoref{FP} and \autoref{FP_BC} for given optimal control } \hat \alpha \Big\},
\]}maps a mean field control $\hat \alpha(t, \cdot,\cdot)$ and the distribution of nodes' states $m(t,\cdot,\cdot)$ to a new distribution
$\mathfrak{M}\left(\hat \alpha(t, \cdot,\cdot), m(t,\cdot,\cdot) \right)$.

\end{definition}

\subsection{Mean Field Equilibrium}\label{Mean Field Equilibrium Def}
\n 
The composition of the two operators described in Definition  \ref{optimal_control_operator} and Definition \ref{distribution_evolution_operator} leads to a fixed point,  giving the MFE of the game. 
Because optimal control gives rise to dynamics involving a large number of nodes, a single node has a negligible effect on the game's outcome as the number of nodes increases: the effect of other nodes on a single node's payoff and fluctuations in motion dynamics will  ``average out".  Thus, we expect that states remain roughly constant over time. To formalize this notion, we define MFE as follows:
\begin{definition}[Mean Field Equilibrium]\label{MEGF_def}
The mean hashrate is called a Mean Field Equilibrium (MFE) of mean field game \ref{general_structure}, 
if and only if
\[
{\bar \alpha_t} = \int_{X} \int_{B} \hat \alpha(t, x, b) m(t, x, b) \diff x \diff b \,,
\]
where
$\hat \alpha(t, \cdot,\cdot) \in \mathfrak{A}\left(m(t, \cdot,\cdot)\right)$, 
 and 
the invariant distribution 
$m(t,\cdot,\cdot) \in\mathfrak{M}\left(\hat \alpha(t, \cdot,\cdot), m(t,\cdot,\cdot) \right)$
is a fixed point of the operator 
\[
\Phi : {[0,\infty) \times {X \times Y}} \to {[0,\infty) \times {X \times Y}},
\] 
such that
\[
\Phi \Big(m(t,\cdot,\cdot) \Big) = \mathfrak{M} \Big(\mathfrak{A}\big(m(t,\cdot,\cdot) \big), m(t,\cdot,\cdot) \Big) \,.
\]

\end{definition}

The MFE as described in Definition \autoref{MEGF_def} is the solution to the time-dependent backward-forward system PDEs 
\ref{HJB} and \ref{FP}, with boundary conditions \ref{HJB_BC} and \ref{FP_BC}
and with parameters (\ref{expected_block_arrival_rate}--\ref{token_price},
\ref{expected_block_arrival_rate}--\ref{total_hashrate}),
where the density $m(t, x, b)$ satisfies the initial condition
$
m \big(0, x(0),b(0)\big) = m_0 (x_0, b_0) ;
$
and 
the value function $v(t, x, b)$ satisfies a terminal condition
$$
v(\infty, x, b)=v_{\infty}(x, b).
$$
The value function $v_{\infty}(x, b)$ is the solution to the time-independent version of these system PDEs \ref{HJB} and \ref{FP}, 
when the state dynamics induced by $\hat \alpha({\infty}, \cdot,\cdot)$ and $m({\infty},\cdot,\cdot)$ 
form an invariant distributions over the infinite-time horizon.
\begin{remark}
As $t \to \infty$, 
the steady state will be reached when 
${\frac{\partial v\left(t, x, b \right)}{\partial t}} \to 0$
and
$~{\frac{\partial m\left(t, x, b \right)}{\partial t}} \to 0$.
We denote the number of cumulated tokens \ref{number_of_cumulated_tokens} to be $K_{{\infty}}$, the number of block rewards \ref{number_of_block_reward} to be $k_{{\infty}}$, and the block arrival rate \ref{expected_block_arrival_rate} to be
${{\lambda}}_{{\infty}}$, respectively, at the infinite-time horizon $t \to \infty$, i.e.,
\begin{equation}
\footnotesize
K_{{\infty}} = \sum_{l=0}^{\infty} \frac{210000 \times 50}{2^{l}}  \to 2.1 \times 10^7; \quad 
k_{{\infty}} = 50 \left(\frac{1}{2}\right)^{l} + \bar k \to \bar k; \quad  
{{\lambda}}_{{\infty}} =  \frac{1}{600} \left( \frac{{  1- \sqrt[M]{1- \frac{2016}{{H_{s-1}}} } }}{1- \sqrt[M]{1- \frac{2016}{H_s}} }\right)
 \to \frac{1}{600} 
 \end{equation}
 Substituting token price \ref{token_price} and reward arrival intensity \ref{expected_block_arrival_rate}, the system of time-independent PDEs becomes
 {\scriptsize
\begin{equation}\label{system_pde}
 \hspace{-0.2cm}
\begin{cases}
{{r}} v_{{\infty}} \left(x, b \right)
&=~~ \displaystyle \sup_{\alpha} \left\{ u\Big(t, x, b,\alpha_{\infty} (x, b) \Big)
+ \Big( r x - c\big({\alpha_{\infty} (x, b)\big) \Big)}\frac{\partial }{\partial x}  v_{{\infty}} \left(x, b \right)  
+\frac{\lambda_{\infty}}{\bar \alpha_{\infty} M} \alpha_{\infty}(x, b) v_{{\infty}} \left(x + k_{\infty} b, b \right)  \right\}
\vspace{0.2cm} \\
 & \qquad \qquad
- \frac{\lambda_{\infty}}{\bar \alpha_{\infty} M} \alpha_{\infty}(x, b) v_{{\infty}} \left(x, b \right)  +\left(\frac{\beta c \big( \bar \alpha_{\infty} M \big)}{K_{\infty}}  - b \right) \frac{\partial }{\partial b}  v_{{\infty}} \left(x, b \right) + \frac{\sigma^2}{2}  \frac{\partial^2 }{\partial b ^2}  v_{{\infty}} \left(x, b \right)
\vspace{0.2cm} \\
\qquad \hat \alpha_{\infty} &=
\quad \arg\max  \left\{ u\Big(t, x, b,\alpha_{\infty} (x, b) \Big)
+ \Big( r x - c\big({\alpha_{\infty} (x, b)\big) \Big)}\frac{\partial }{\partial x}  v_{{\infty}} \left(x, b \right)  
+\frac{\lambda_{\infty}}{\bar \alpha_{\infty} M} \alpha_{\infty}(x, b) v_{{\infty}} \left(x + k_{\infty} b, b \right)  \right\}
 \vspace{0.2cm}  \\
\qquad 0
&=~~
- \frac{\partial }{\partial x} \Big[ \Big(r x - c\big({\hat \alpha_{\infty}  (x, b) \big) \Big)}  m_{{\infty}}\left(x, b \right)  \Big]
- \frac{\partial }{\partial b} \Big[ \Big( \frac{\beta c \big( \bar \alpha_{\infty} M \big)}{K_{\infty}} - b\Big)  m_{{\infty}}\left(x, b \right)  \Big] 
+ \frac{\sigma^2}{2}  \frac{\partial^2 }{\partial b ^2}  m_{{\infty}}\left(x, b \right) 
\vspace{0.2cm}  \\
 & \qquad \qquad + \frac{\lambda_{\infty}}{\bar \alpha_{\infty} M} \hat \alpha_{\infty} (x- k_{\infty} b, b)  m_{{\infty}}\left(x - k_{\infty} b , b \right) - \frac{\lambda_{\infty}}{\bar \alpha_{\infty} M} \hat \alpha_{\infty} (x, b)m_{{\infty}}\left(x, b \right) 
\vspace{0.2cm}   \\
\qquad \bar \alpha_{\infty} &=~~ \displaystyle \int_{X} \int_{B} \hat \alpha_{\infty}(x, b) m_{\infty}(x, b) \diff x \diff b \,.
\end{cases}
\end{equation}}
\end{remark}
The MFE defined above fully characterizes the evolution of the mining game. 
The numerical method for solving the MFE has been well-developed. 
Here, we apply a methodology based on finite difference schemes as presented in \cite{achdou2020mean}.
The steps are organized as follows: 
\begin{description}
\item[Step 1]\label{step_1} Use finite difference schemes to solve 
the system of time-independent PDEs \ref{system_pde} iteratively, for 
a steady terminal condition at the infinite time horizon $t \to \infty$.
\item[Step 2]\label{step_2} Initialize $\bar \alpha(t)$, and
for each time iteration
\begin{enumerate}[label=(\Roman*), font=\upshape] 
\item \label{step_21} Solve for the value function $v(t,x, b)$ in the Hamilton-Jacobi-Bellman \autoref{HJB}, which runs backwards in time with terminal condition $v^{\infty}(x, b)$, as solved in Step 1.
\item \label{step_22} Recover the optimal control $\hat \alpha(t,x, b)$ using \autoref{numerical_control} with the value function $v(t,x, b)$, as solved in Step 2 \ref{step_21}.
\item \label{step_23} Solve for the density function $m(t,x, b)$ in the Fokker-Planck \autoref{FP} using the optimal control $\hat \alpha(t,x, b)$ from Step 2 \ref{step_22}, which runs forward in time with an initial condition $m_0 (x_0, b_0)$. 
\item \label{step_24} For each time iteration, 
introduce a parameter of inertia, $w \in [0, 1)$, to reduce oscillations in searching for the equilibrium (here we set $w :=  \frac{\| \bar \alpha(t)^{\mathrm{new}} - \bar \alpha(t) \|}{\| \bar \alpha(t)^{\mathrm{new}} - \bar \alpha(t) \|_{\infty }} $ for fast convergence);
and update the mean hashrate $\bar \alpha(t)$ according to
\begin{equation}\label{convergence_critiria}
\bar{\alpha}({t})^{\mathrm{new}}:=w \bar{\alpha}({t})+(1-w) \int_{\mathbb{R}} \int_{\mathbb{R}} \hat \alpha(t, x, b) m(t, x, b) \diff x  \diff b \, ,
\end{equation}
using
the optimal control $\hat \alpha(t,x, b)$ from Step 2 \ref{step_22} and the density function $m(t,x, b)$ from Step 2 \ref{step_23}.
\end{enumerate}
\item[Step 3] \label{step_4}
Repeat Step 2 with $\bar{\alpha}(t) = \bar{\alpha}(t)^{\mathrm{new}}$ until convergence results.
\end{description}

In the next section, we will demonstrate a numerical result
 for this MFE
  and test if mappings $\Psi$ and $\vpi$ in the Bitcoin consensus protocol, as described in Problem  \autoref{design_problem}, is well designed.

\section{Results}\label{section_4} 

In this section, we use mappings  $\Psi$ and $\vpi$ in the Bitcoin consensus protocol (as described in \Cref{example_POW}),
as well as estimated parameters in Game \autoref{general_structure},
 to demonstrate a numerical result for the MFE (as described in \Cref{Mean Field Equilibrium Def}). We also analyze mining profitability and blockchain security for this equilibrium, and discuss whether the Bitcoin consensus protocol is well designed.

\subsection{Parameter Estimation}

Let the time to start mining $(t=0)$ be 03-Jan-2009 13:15:00. 
For simplicity, we have altered the timescale from seconds to fortnights when presenting a numerical solution. 
Assume the interest rate is 2\% per annum, which can be rescaled to $r = 7.67 \times 10^{-4}$ per fortnight. 
Set $I=200$ discrete points in the wealth dimension and $J=220$ points in the token price dimension.
Let $\Delta x = 5 \times 10^{13}$ and $\Delta b = 4.6 \times 10^{13}$ denote the equal distance between grid points.

\begin{figure}[h]
\centering
\begin{subfigure}[t]{0.42\textwidth}
\subcaption{
}
\label{fig_technologb_progress}
\centering
{\includegraphics[width=0.88\linewidth]{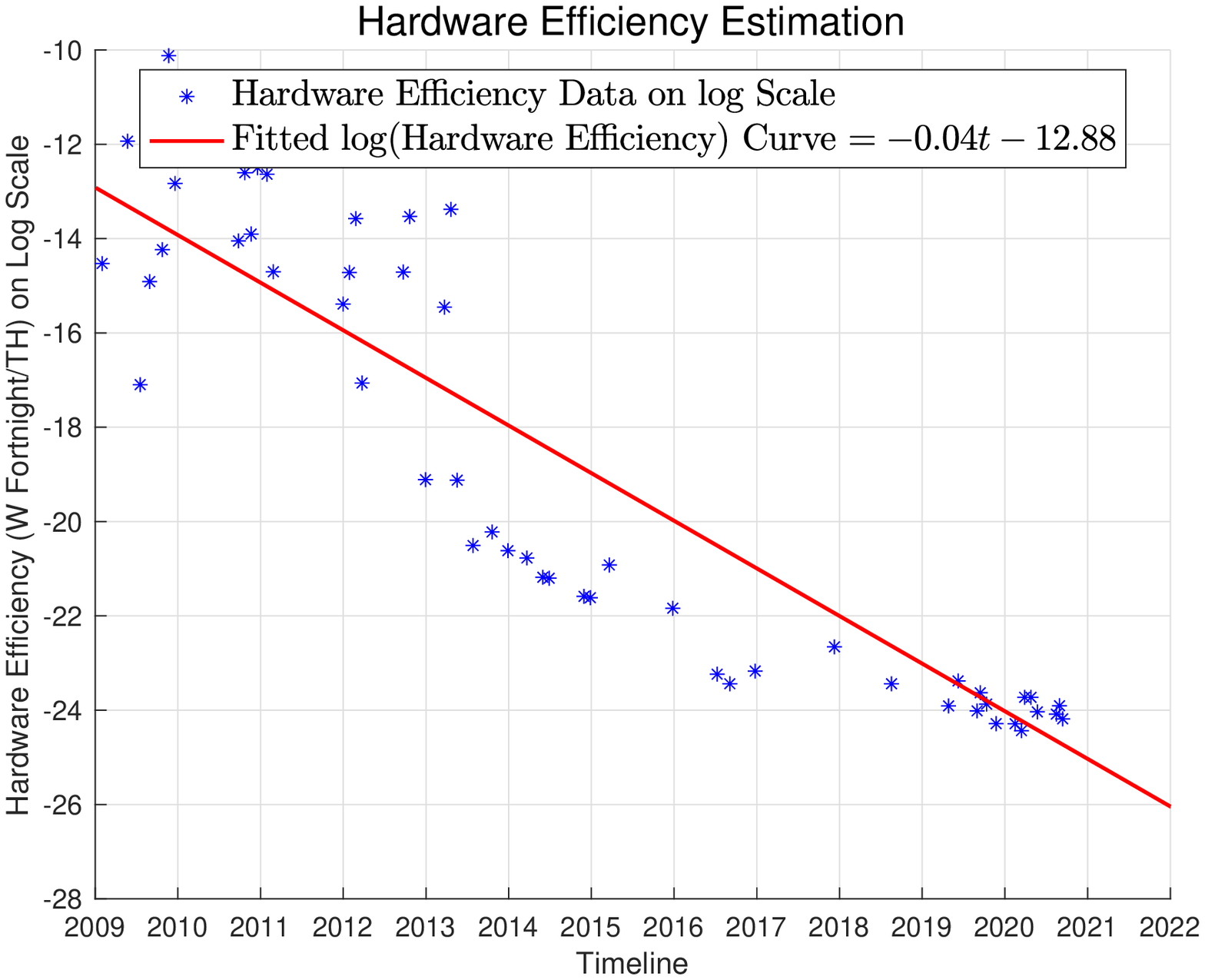}}
\end{subfigure}
\quad
\begin{subfigure}[t]{0.42\textwidth}
\subcaption{
}
\label{fig_revenue}
\centering
{\includegraphics[width=0.88\linewidth]{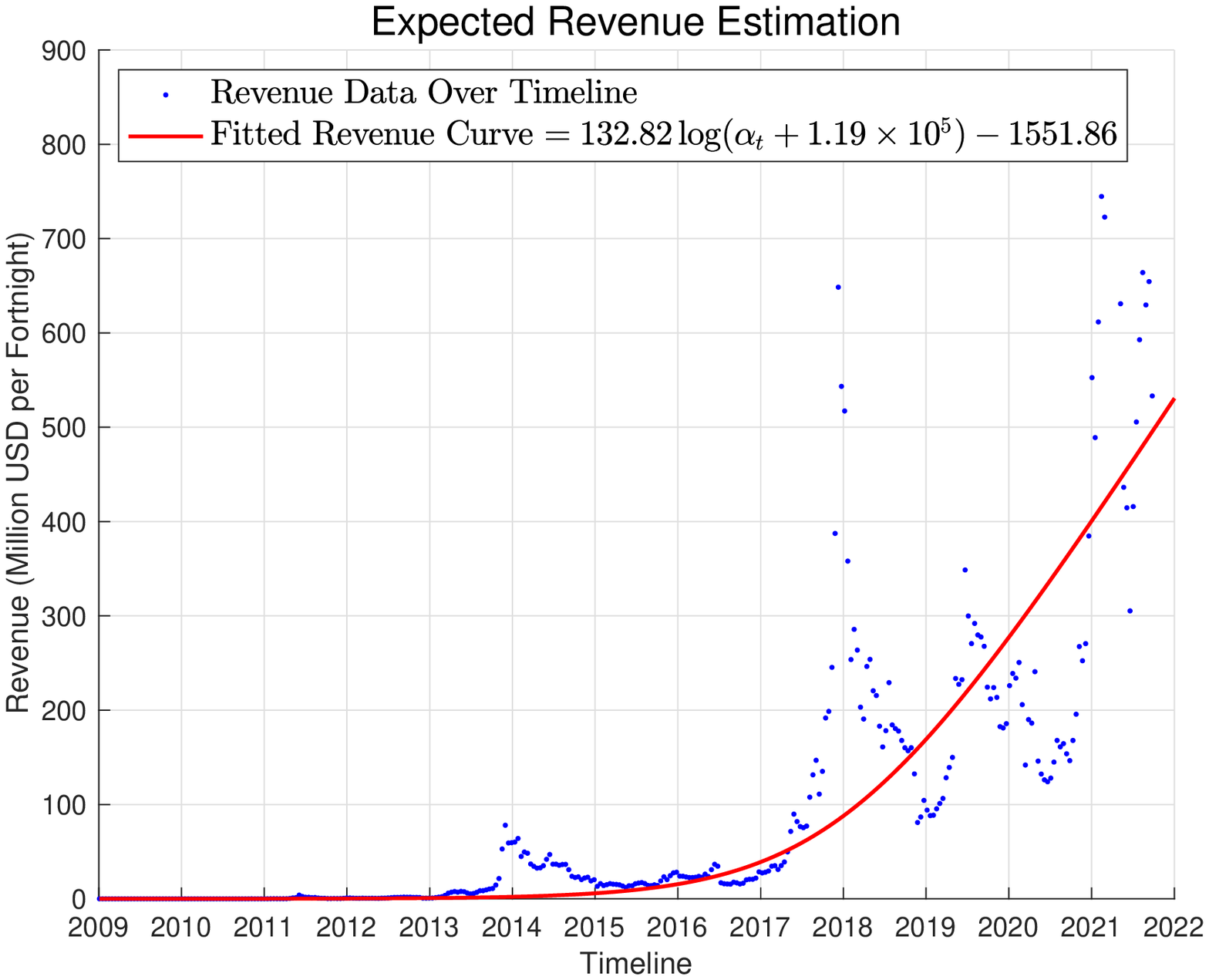}}
\end{subfigure}
\medskip
\vspace{0.3cm}
\begin{subfigure}[t]{0.42\textwidth}
\subcaption{}
\label{fig_costs}
\centering
{\includegraphics[width=0.88\linewidth]{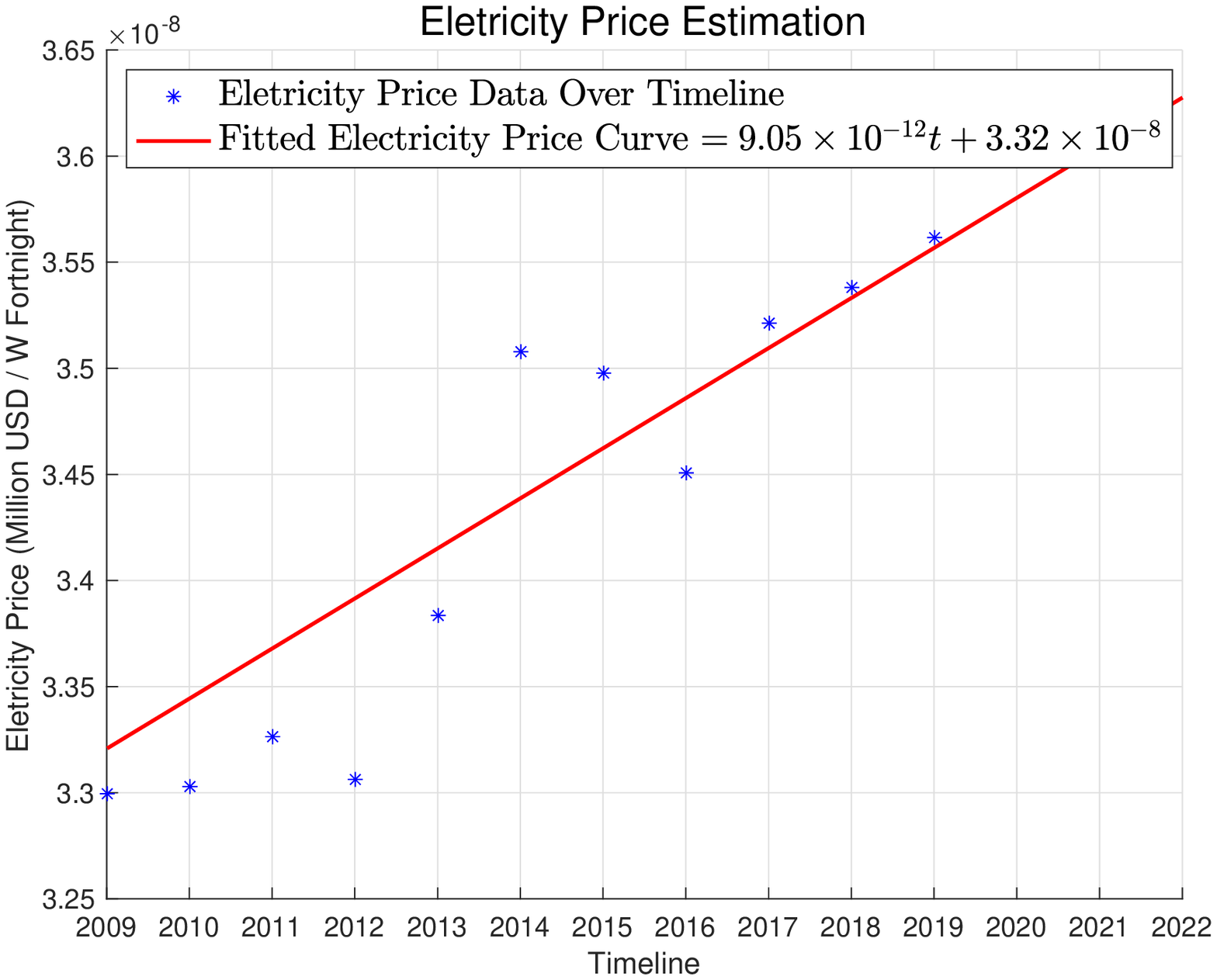}}
\end{subfigure}
\quad
\begin{subfigure}[t]{0.42\textwidth}
\subcaption{}
\label{fig_Number_of_Nodes_Model}
\centering{\includegraphics[width=0.86\linewidth]{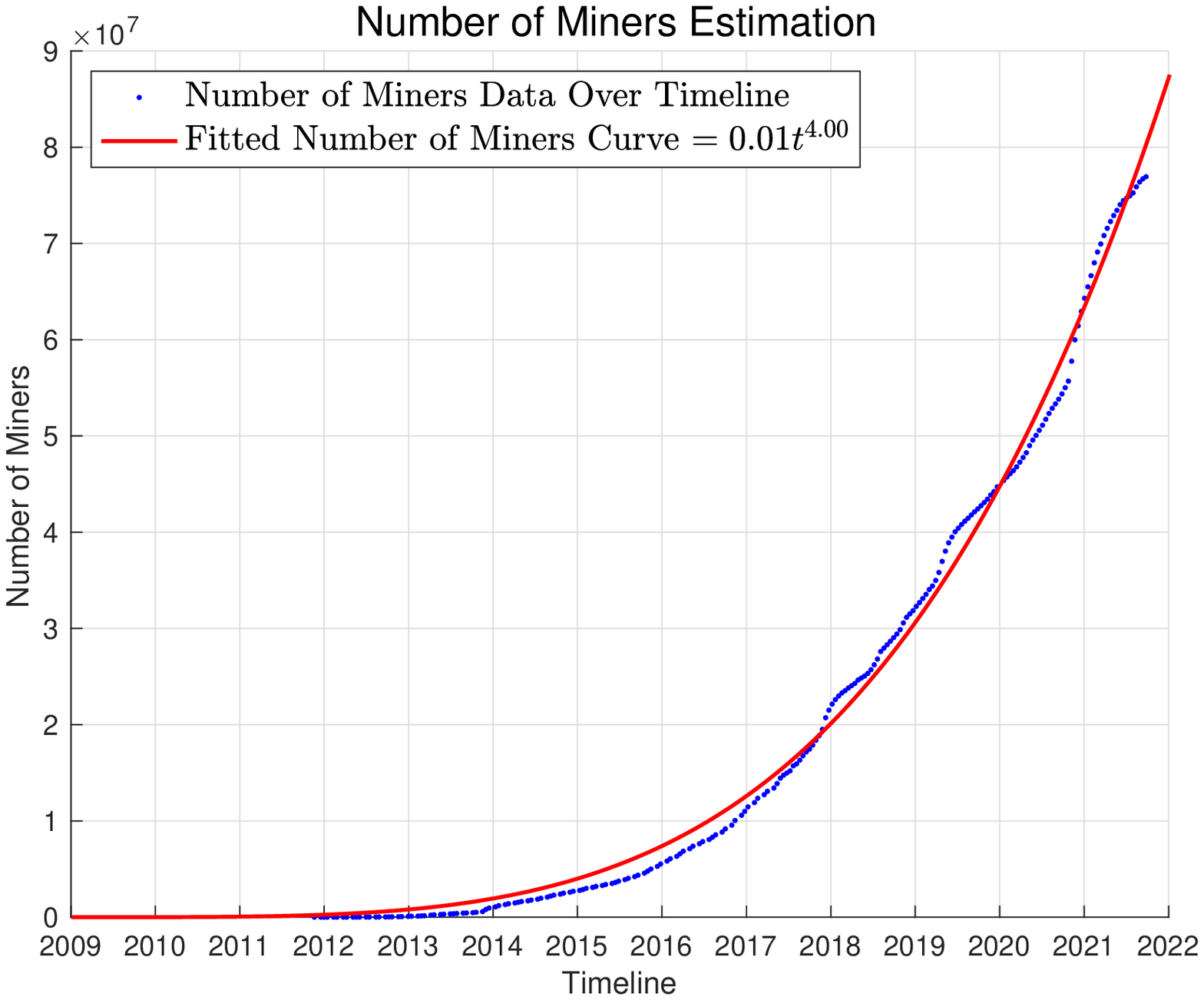}} 
\end{subfigure}
\caption{Fitting curves to estimate model parameters:  
(\subref{fig_technologb_progress}) Fitting a curve to the energy efficiency data of mining hardware over a  timeline, to estimate the average initial efficiency value and the technology discount rate.     
(\subref{fig_revenue}) Fitting a curve to revenue data from different nodes, to estimate how much revenue each unit of TeraHashes can yield over the time horizon. 
(\subref{fig_costs}) Fitting a curve to cost data over the timeline, to estimate the cost of the hashrate when discounted by the rate of technological progress.  
(\subref{fig_Number_of_Nodes_Model}) Fitting a curve to data on node quantity over the timeline, to estimate the growth of number of nodes in the network.
}
\end{figure}

We conducted an extensive and holistic review of the literature, extracting data from peer-reviewed scientific publications \citep{pathirana2019energy, hayes2017cryptocurrency}, and mining machine stores websites, to thoroughly investigating mining hardware efficiencies. 
By assuming that mining hardware converts electricity into hashing computation at the highest possible rate, mining hardware efficiencies can be calculated by taking the ratio of power consumption data to the corresponding hashrate it produces. Given that the resulting ratio decreases exponentially over time, we plotted this efficiency data in a log scale over the timeline, as demonstrated in \autoref{fig_technologb_progress}. 
From the fitted curve, the coefficients are $-0.04  \in [-0.05, -0.03]$ and $-12.88 \in [-14.16, -11.61]$ with $95\%$ confidence bounds.
Thus, the average energy efficiency of mining hardware at the initial time is estimated to be $e^{-12.88} = 2.55 \times 10^{-6}$ (W per TeraHash/Fortnight), and the technology discount rate is estimated to be $0.04$ (per Fortnight).

Assuming that nodes study the relationship between revenue and their hashrate over time, we collected real data \footnote{Special thanks to ListedReserve Pty Ltd for providing a rich data source}  
from 1449 different nodes as well as from reputable commercial sources (such as blockchain.com) to estimate how much revenue each unit of TeraHashes can yield. 
A curve fitting tool was applied as shown in \autoref{fig_revenue}; 
the coefficients are $132.8  \in [102.9, 162.8]$ and $1.19 \times 10^5 \in [4.22 \times 10^4, 1.95 \times 10^5]$ with $95\%$ confidence bounds.  
Our findings suggest that the expected revenue can be expressed in the form of
$\theta_1 \log (\alpha_t + \theta_2) + \theta_3$.

Moreover, by assuming that the primary ongoing cost of mining is driven by the sum of the cost of electricity and the cost of updating mining equipment,
we plotted the ongoing cost over the timeline. 
Notably, 
the growth rate is approximately linear, as depicted in \autoref{fig_costs}.
In the fitted curve, the coefficients are $9.05 \times 10^{-12}  \in [9.04 \times 10^{-12}, 9.05 \times 10^{-12}]$ and $3.32 \times 10^{-8} \in [3.32 \times 10^{-8}, 3.32 \times 10^{-8}]$ with $95\%$ confidence bounds.
This shows that mining software and hardware are exponentially improving at the rate of technological progress over the time horizon. 
The linear cost growth in \autoref{fig_costs} is negligible compared to the exponential growth rate of technological progress. 
The cost of the hashrate discounted at the rate of 
technological progress is the product of the primary ongoing cost $3.32 \times 10^{-8}$ (Million USD / W Fortnight) and the average energy efficiency of mining hardware $2.55 \times 10^{-6}$ (W per TeraHash / Fortnight) at the initial time, which gives $c = 8.43 \times 10^{-14}$ (Million USD per TeraHash).

For simplicity, we have also assumed the cost function to be linear  $c (\alpha) = c \alpha$.
Given the lowest possible price for the energy required and the most efficient equipment (as described above),
the utility (in Million USD) can be written as the difference between revenue and cost, which is represented as
\begin{equation}\label{estimate_utility_function}
u(\alpha_t) = 132.82 \log(\alpha_t+1.19 \times 10^5)  - (8.43 \times 10^{-14}) \alpha_t -1551.86 \,.
\end{equation}
The utility function \ref{estimate_utility_function} is concave, as demonstrated in \autoref{fig_Utility_Function}.
\begin{figure}[H]
\centering
\centering{\includegraphics[width=0.45\linewidth]{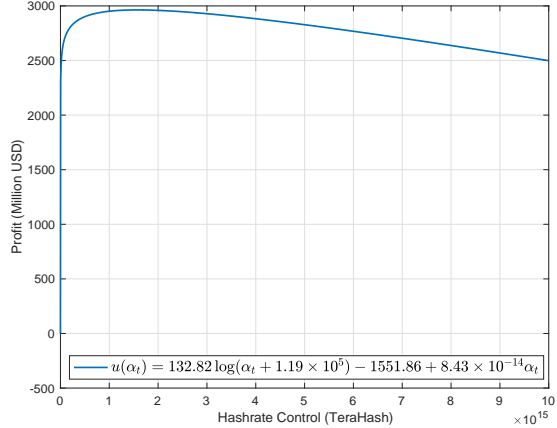}}
\caption{Utility function as described in \autoref{estimate_utility_function}.}
\label{fig_Utility_Function}
\end{figure}

Using Definition \ref{outcome}, the optimal control hashrate can be written as 
\[
\hat \alpha_t =
\arg\max \left\{ \theta_1 \log (\alpha_t + \theta_2) + \theta_3   - c \alpha_t 
+(rx - c \alpha_t ) \frac{\partial }{\partial x} v\left(x, b \right) 
+ \frac{\lambda_{t}}{h_t}\alpha_t \Big(v\left(x+k_t b, b\right)- v\left(x, b\right) \Big) 
\right\} \,.
\]
Taking the derivative with respect to optimal control renders a result that is equal to 0.
\[
\frac{\theta_1 }{\hat \alpha_t + \theta_2} - c  - c  \frac{\partial }{\partial x} v\left(x, b \right) 
+ \frac{\lambda_{t}}{h_t}  \Big(v\left(x+k_t b, b\right)- v\left(x, b\right) \Big) =0 \,.
\]
We can recover the optimal control: 
\begin{equation}\label{numerical_control}
{\hat \alpha_{t}}= \frac{\theta_1 }{ c  + c  \frac{\partial }{\partial x} v\left(x, b \right) 
+ \frac{\lambda_{t}}{h_t}  \left(v\left(x, b\right) - v\left(x+k_t b, b\right) \right) } - \theta_2 \,.
\end{equation}
A node with negative wealth does not possess resources to mine. We may apply a constraint that renders a node with zero optimal hashrate as inactive. Thus, for active nodes, we obtain
{
\footnotesize
\begin{equation}\label{active_miner}
O(t) = \left\{ \hat \alpha(x, b,t) >0  ~\Big |~ \hat \alpha(x, b,t) = \frac{{\theta_1} }{ c  + c  \frac{\partial }{\partial x} v\left(x, b \right) 
+ \frac{\lambda_{t}}{h_t}  \left(v\left(x, b\right) - v\left(x+k_t b, b\right) \right) } - \theta_2
\right\}\,.
\end{equation}
}To analyze the fixed point of the mapping described in Definition \ref{MEGF_def}, 
we tested our model with an increasing node growth model $M(t) = a x^b$ in the form of a power function, 
as shown in \autoref{fig_Number_of_Nodes_Model}.
In the fitted curve, the coefficients are $6.58 \times 10^{-3}  \in [4.29 \times 10^{-3}, 8.87 \times 10^{-3}]$ and $4.00 \in [3.94, 4.06]$ with $95\%$ confidence bounds.
We remark that modeling the number of nodes is a separate issue, which is beyond the scope of the current paper.

\subsection{Mean Hashrate}

A converged MFE consists of a long-run steady state as well as short-run dynamics, as shown in \autoref{fig_converged_mean_hashrate}. 
This figure represents the dynamic of computational power devoted to blockchain mining, which is measured by the number of trillion hashes computed per fortnight (TeraHash/Fortnight).

\begin{figure}[H]
\centering
{\includegraphics[width=0.5\linewidth]{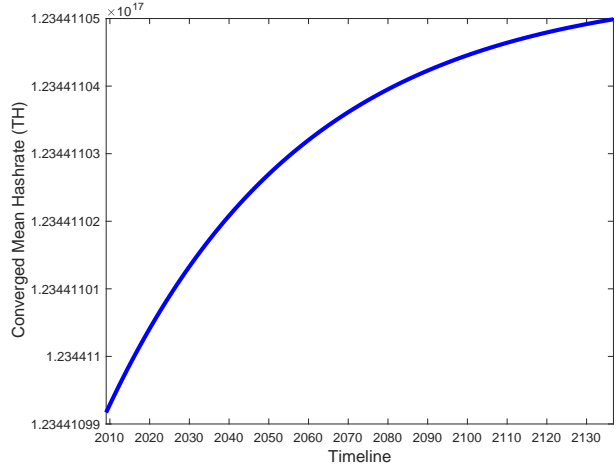}}
\caption{Converged mean hashrate over time, as described in definition \ref{MEGF_def}.
The discounted hashrate has increased from $1.23441099 \times 10^{17}$ to $1.23441105 \times 10^{17}$ . The increment is approximately $5.82 \times 10^{9}$ TeraHash. 
}
\label{fig_converged_mean_hashrate}
\end{figure}

The mean hashrate is non-decreasing over time, as shown in the blue line. This trend may be caused by the influence of market forces as new nodes enter the market to compete for rewards. As the number of nodes increases, the total hashrate will increase. The expected reward arrival rate and the amount of reward are adjusted in real time by the total hashrate. Changes in this hashrate may cause mining to be more or less profitable, and nodes will react by investing computational power accordingly as they pursue rewards.

As we mentioned in Game  \autoref{general_structure} \autoref{discounted_hashrate}, 
this hashrate represents the hashrate discounted by the rate of
the technological progress, which is equivalent to how much older generations of hardware can compute for the same cost. An increase in this discounted mean hashrate over time makes the underlying blockchain more secure, even though technological advancement in computational hardware is improving at an exponential rate.

The initial mean hashrate is designated in the PoW protocol, as given in \autoref{initial_hash}, 
requiring a large amount of the hashrate at the initial time period to build a secure blockchain. In reality, the mean hashrate of the Bitcoin blockchain did not have such large amounts available in the early days. However, it was an innovative product, and people did not think that it would be valuable enough to be attacked. By the time everyone thought it was, the strength of the Bitcoin network was already sufficiently resilient to attacks  \citep{antonopoulos2017mastering}.

\subsection{Density Evolution}

Population behavior in the MFE can be described through the evolution of density distribution. For any given initial probability density function $m_0(x_0,b_0)$,
the stationary distribution of active nodes at the terminal time can be found by solving time-independent PDEs 
\ref{system_pde},
as shown in \autoref{fig_initial_distibution}.

\begin{figure}[H]
\centering
\begin{subfigure}[t]{0.62\textwidth}
\subcaption{
}
\label{fig_steady_distibution}
\centering{\includegraphics[width=\linewidth]{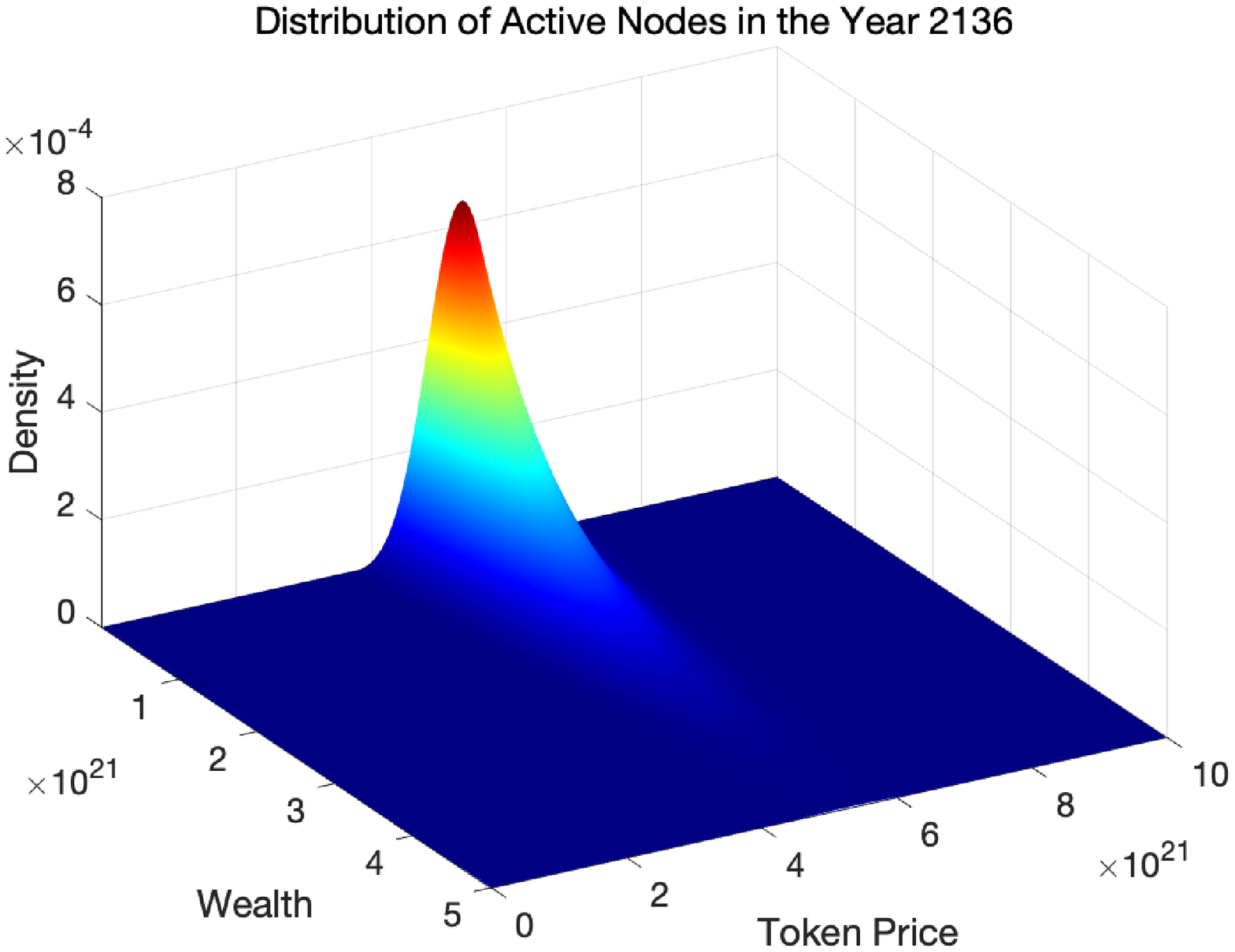}}
\end{subfigure}

\begin{subfigure}[t]{0.39\textwidth}
\subcaption{
}
\label{fig_steady_distibution_1}
\centering{\includegraphics[width=\linewidth]{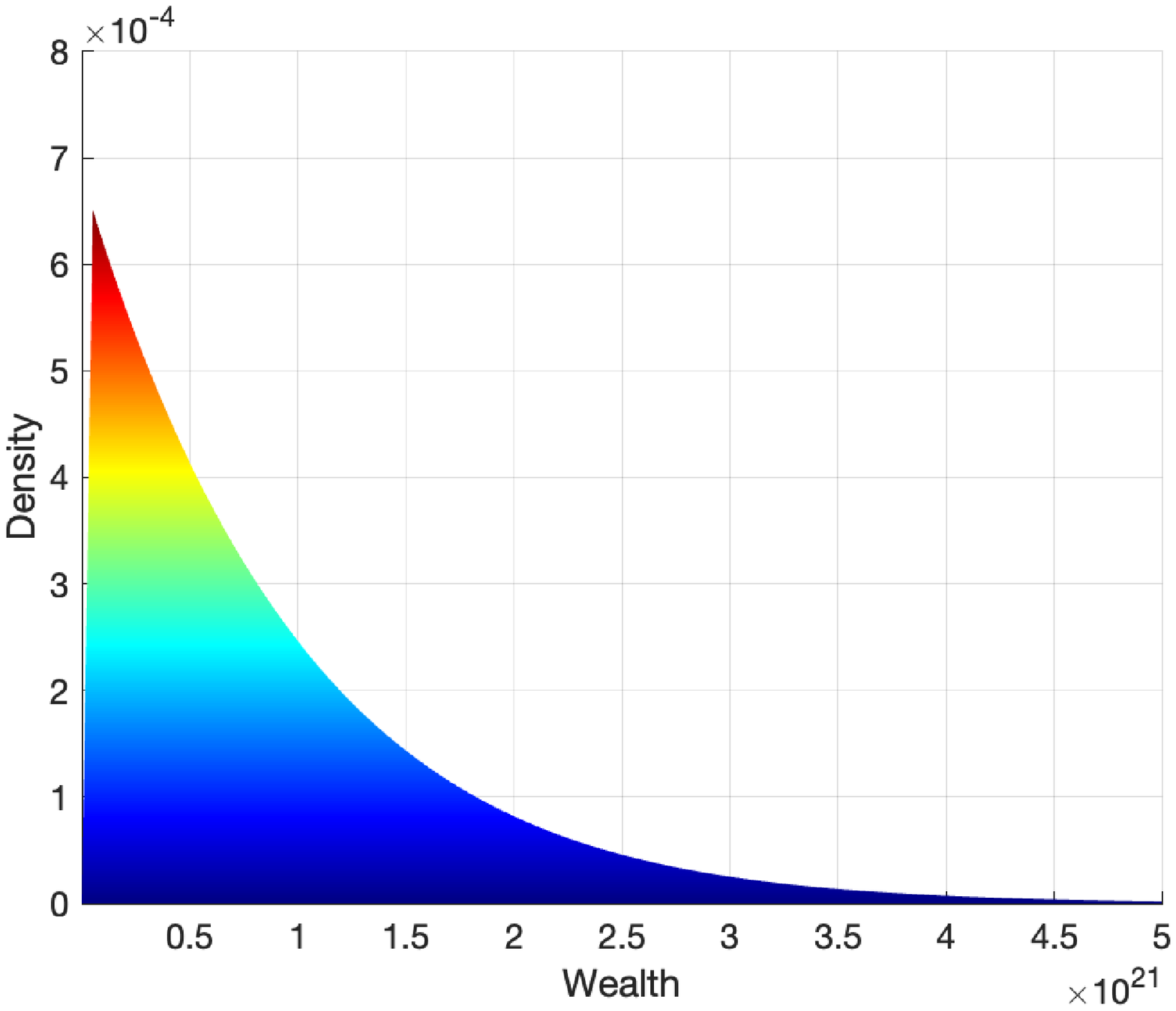}}
\end{subfigure}
\quad
\begin{subfigure}[t]{0.39\textwidth}
\subcaption{
}
\label{fig_steady_distibution_2}
\centering{\includegraphics[width=\linewidth]{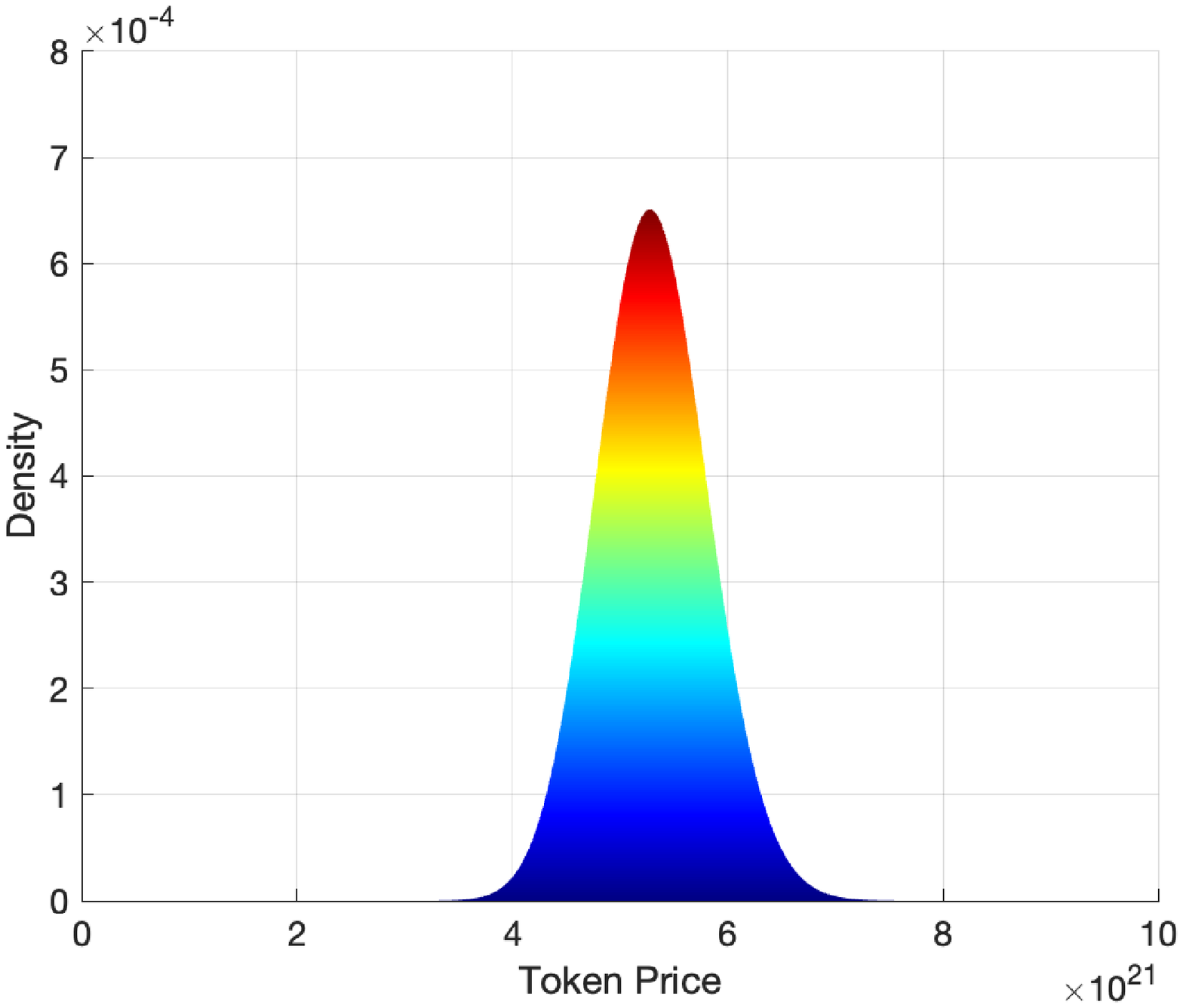}}
\end{subfigure}
\caption{
(\subref{fig_steady_distibution}) The stationary wealth-token price distribution of active nodes at the terminal time. 
(\subref{fig_steady_distibution_1}) and 
(\subref{fig_steady_distibution_2}) demonstrated 
the side views of (\subref{fig_steady_distibution}).
}
\label{fig_initial_distibution}
\end{figure}

The two-dimensional initial density is assumed to be accumulated at $b_0 = 0$ in the token price dimension and has an exponential distribution in the wealth dimension
\begin{equation}
m_0 (x_0, b_0) = \left(\frac{1}{\Delta x}\right) e^{-\frac{x}{\Delta x}} \,.
\end{equation}
Each node optimizes its total utility by solving a dynamic programming problem and recovering the optimal control from the value function. As every node uses its optimal control, a better recovery for the evolution of the density distribution is obtained. The collection of the resulting density distributions evolves with the optimal control strategy of each node over time, which is demonstrated in \autoref{evolution_density}.

\begin{figure}[H]
\centering
\begin{subfigure}[t]{0.43\textwidth}
\subcaption{
}
\label{Distribution_a}
\centering
{\includegraphics[width=\linewidth]{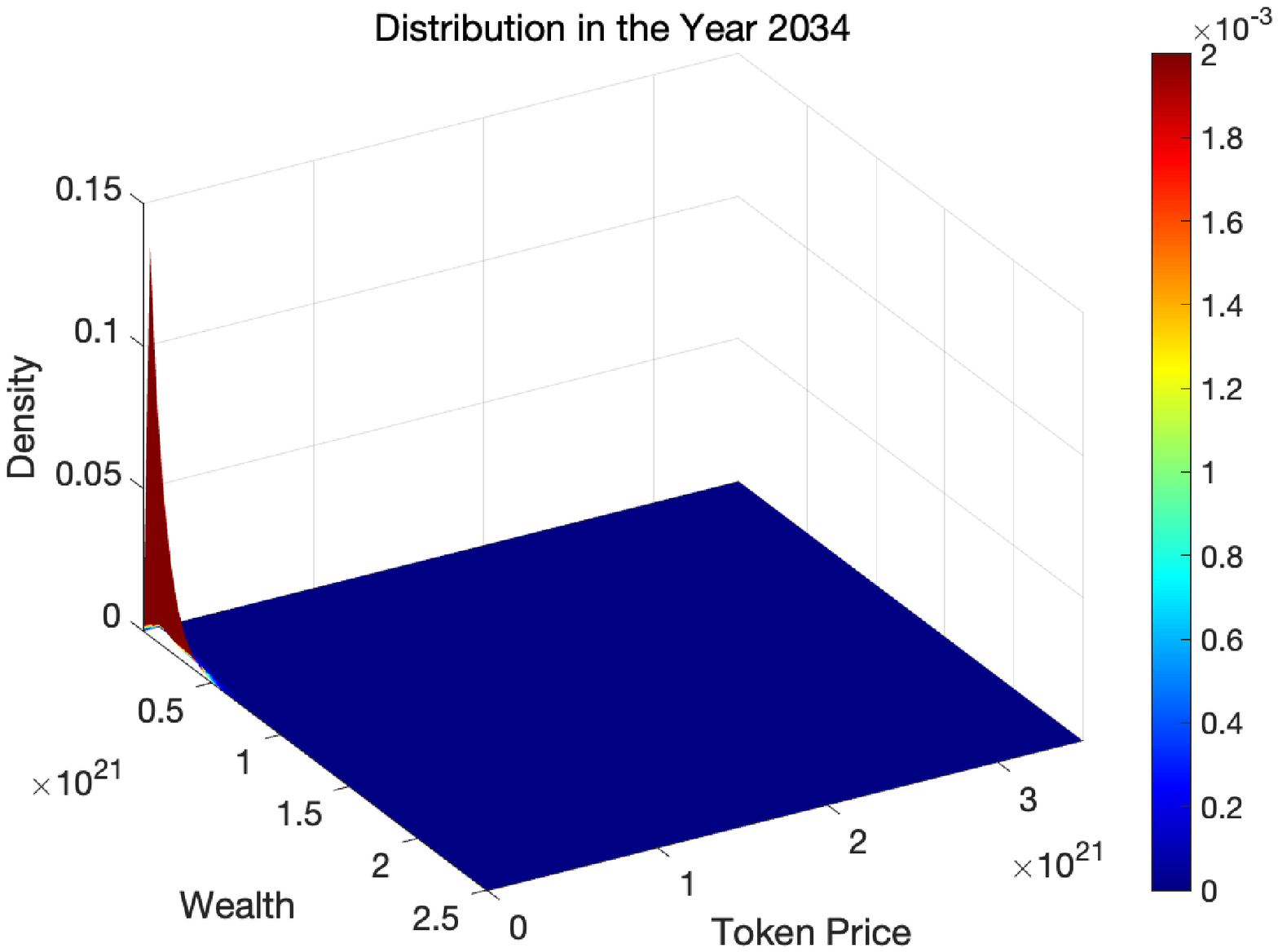}}
\end{subfigure}
\quad
\centering
\begin{subfigure}[t]{0.43\textwidth}
\subcaption{
}
\label{Distribution_b}
\centering
{\includegraphics[width=\linewidth]{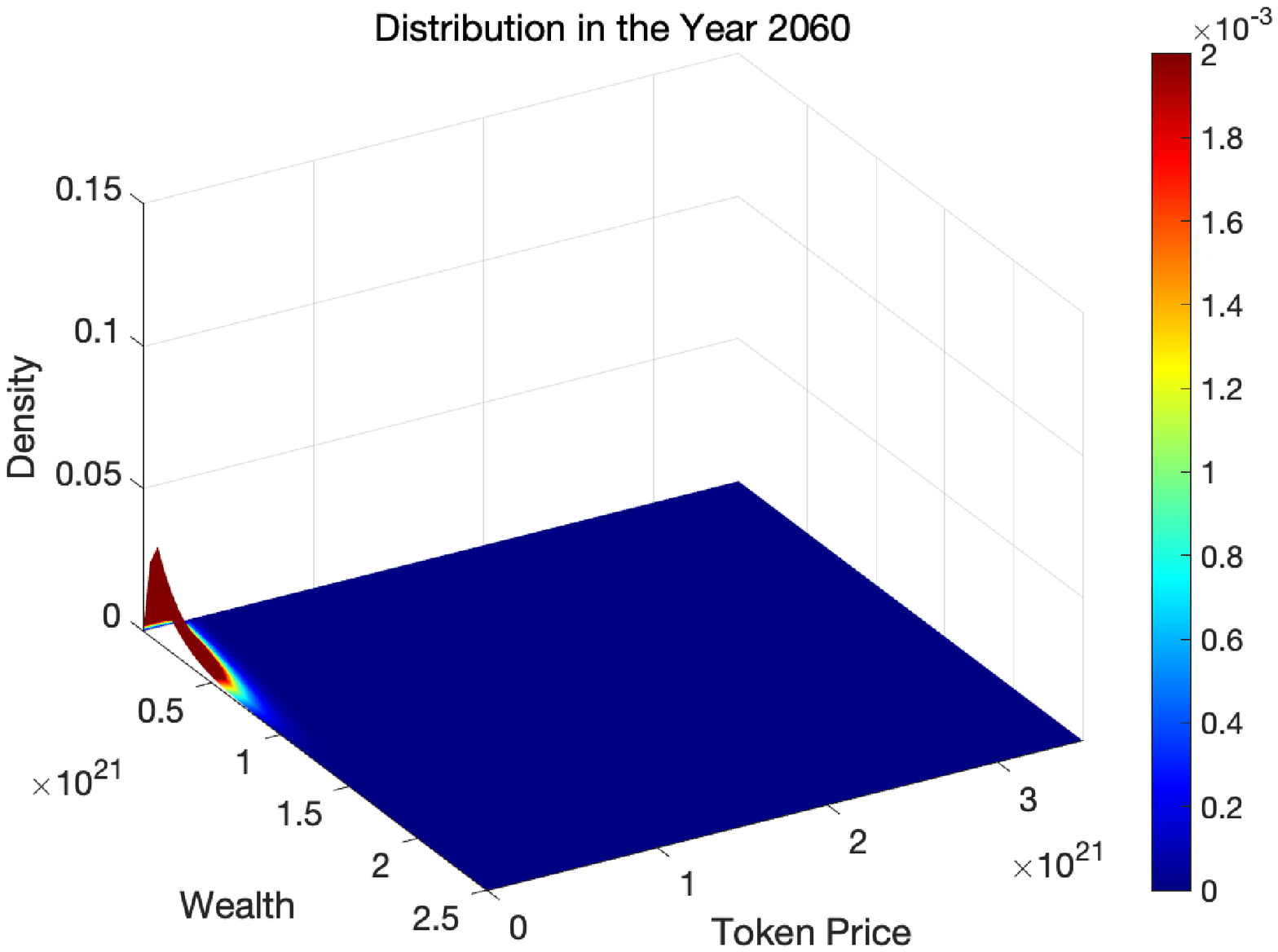}}
\end{subfigure}
\medskip
\centering
\begin{subfigure}[t]{0.43\textwidth}
\subcaption{
}
\label{Distribution_c}
\centering
{\includegraphics[width=\linewidth]{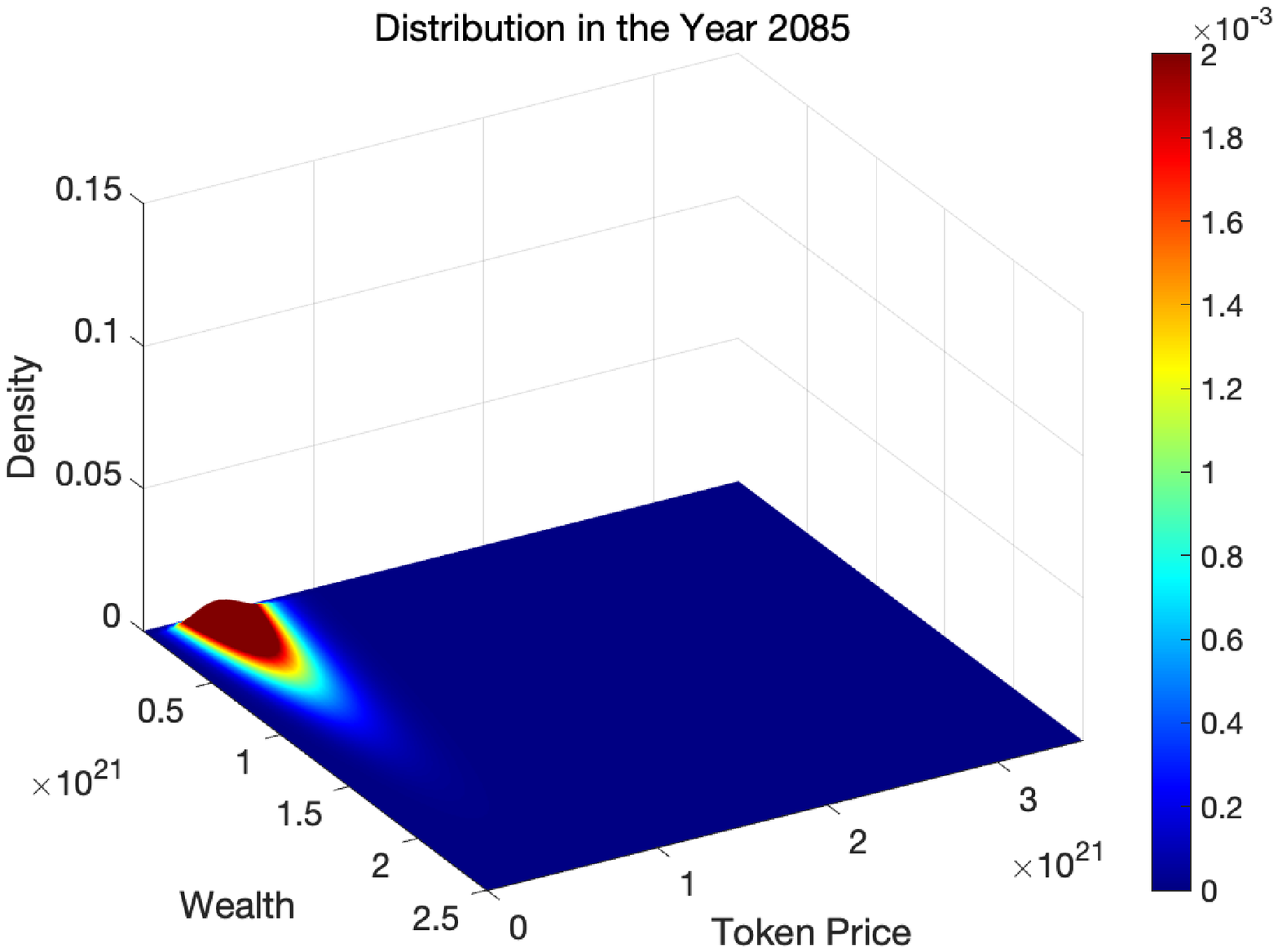}}
\end{subfigure}
\quad
\centering
\begin{subfigure}[t]{0.43\textwidth}
\subcaption{
}
\label{Distribution_d}
\centering
{\includegraphics[width=\linewidth]{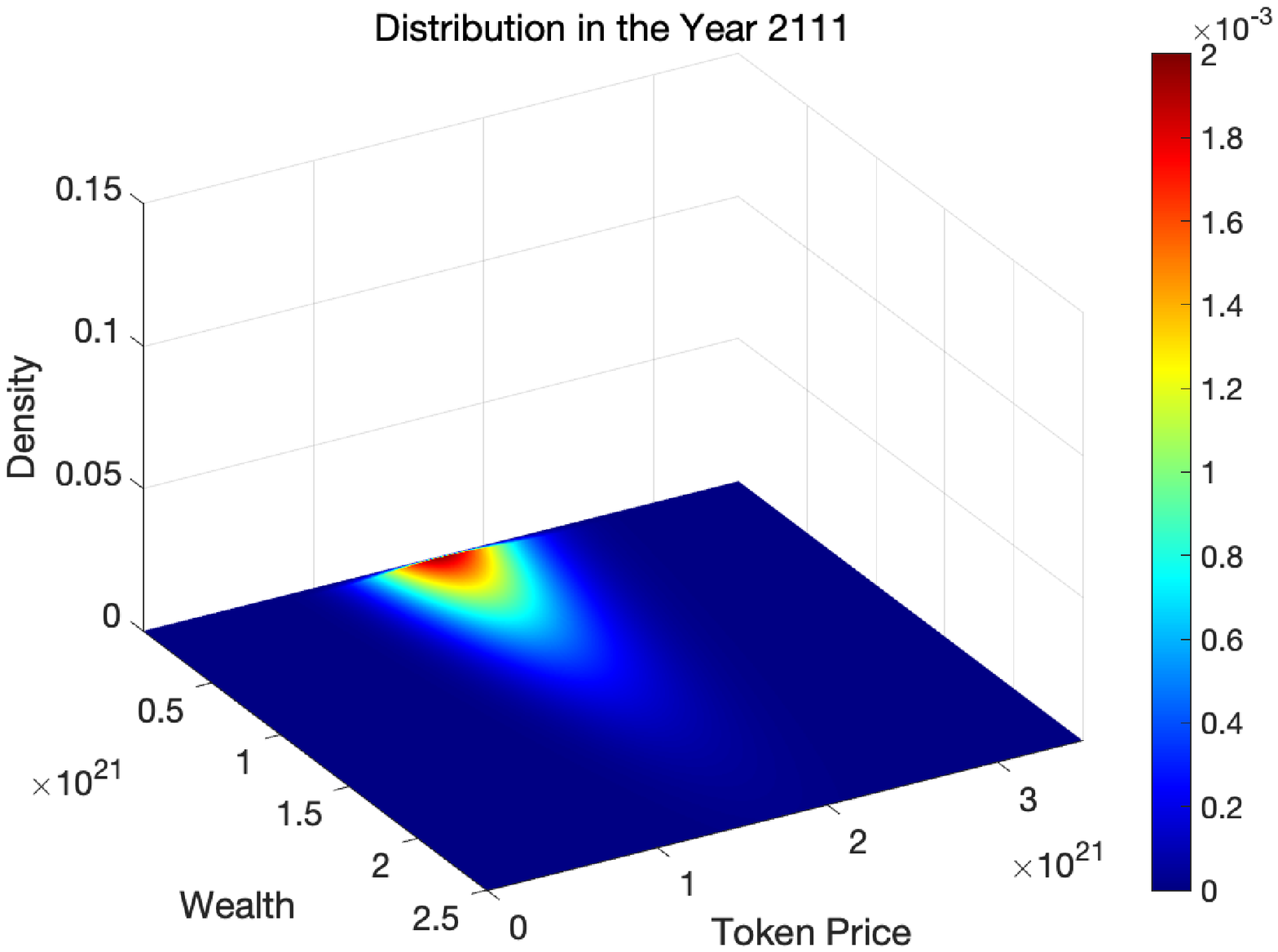}}
\end{subfigure}
\caption{
Evolution of the density function for active nodes over time. 
(\subref{Distribution_a}), (\subref{Distribution_b}), (\subref{Distribution_c}) and (\subref{Distribution_d}) show the evolution of the density function for active nodes in years 2040, 2072, 2104, and 2136, respectively.
}
\label{evolution_density}
\end{figure}

 In the token price dimension, distribution is driven by a mean reversion OU-process and is dependent on the current token price value. The total hashrate acts as an equilibrium level for the process.
Since the token price process is not directly controlled by individual nodes, 
we can reduce the dimensions of the state variables and focus on the evolution of the marginal density function with respect to the wealth state, as demonstrated in \autoref{Evolution of the Distribution}.

\begin{figure}[H]
\centering{\includegraphics[width=0.68\linewidth]{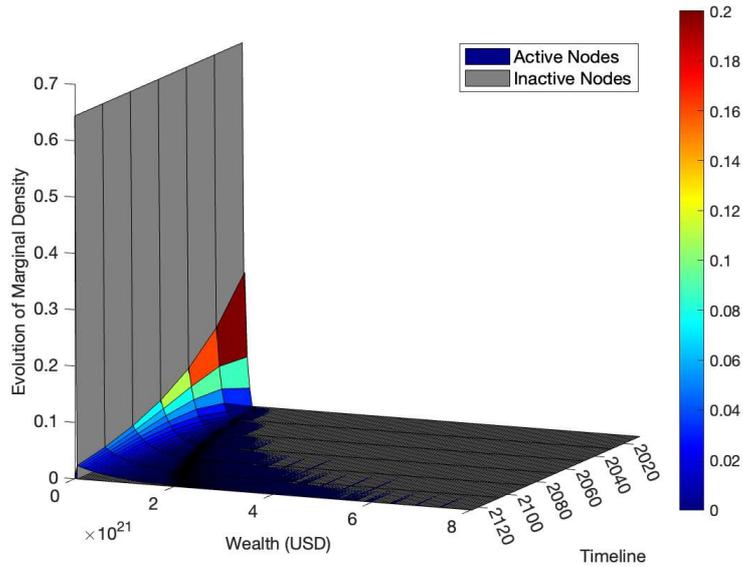}}
\caption{Marginal distribution with respect to the wealth state, as plotted on a timeline.
The majority of nodes lost their wealth and thus moved towards the left in the wealth marginal distribution.
The ergodic measure $m$ has a singularity on line $x=0$ in the wealth dimension, shaded in gray, which represents inactive nodes that have insufficient wealth and hence zero optimal controls.
While a smaller number of active nodes, who had more money originally, shaded in colors, moved to the right---that is, active nodes accumulated greater wealth over time.}
\label{Evolution of the Distribution}
\end{figure}

\autoref{Evolution of the Distribution} illustrates that
competition reduces the concentration of wealth as the number of nodes increases over time. This is because increases in competition are a source of risk to nodes and reduce their chances of gaining a reward, thereby diminishing the value of their wealth. Thus, the majority of the nodes becomes inactive and moves to the left of the wealth marginal distribution, forming a big spike.

Compared with the initial distribution, 
\autoref{Evolution of the Distribution} also indicates
the wealth of rich active nodes increases at a greater rate than poorer active nodes.
There are a few studies on user behavior and wealth accumulation in the Bitcoin network that suggest the rich have indeed become richer \citep{gupta2017gini}. 
Our numerical results agree with this phenomenon, suggesting that steps should be taken to curb such wealth accumulation in the network.

Wealth marginal distributions are further investigated in \autoref{wealth_distribution}.    
The active nodes of 
\autoref{Evolution of the Distribution} are plotted on a normalized log scale in \autoref{fig_Distribution_Evolution_2D_log},  
which shows that the wealth marginal distribution of the Bitcoin blockchain is skewed. 
We then compare our results to real world information. 
Given that the Bitcoin blockchain is a public data structure, 
we may estimate the distribution of wealth across all known Bitcoin addresses from publicly available blockchain information. This is plotted in blue stars \autoref{wealth_distribution_data}
 \footnote{The website such as https://bitinfocharts.com/top-100-richest-bitcoin-addresses.html that tracks the distribution of Bitcoin across all known Bitcoin addresses}. 
When making this comparison, it becomes clear that 
the shape of equilibrium distribution, as shown in the red line, 
resembles that of the real world data (plotted in blue stars). We cannot draw any stronger conclusions, because the data in \autoref{wealth_distribution_data} does not indicate which nodes those wallet addresses belong to; multiple wallet addresses may be controlled by the same node.

\begin{figure}[H]
\centering
\begin{subfigure}[t]{0.42\textwidth}
\subcaption{}
\label{fig_Distribution_Evolution_2D_log}
\centering{\includegraphics[width=0.9\linewidth]{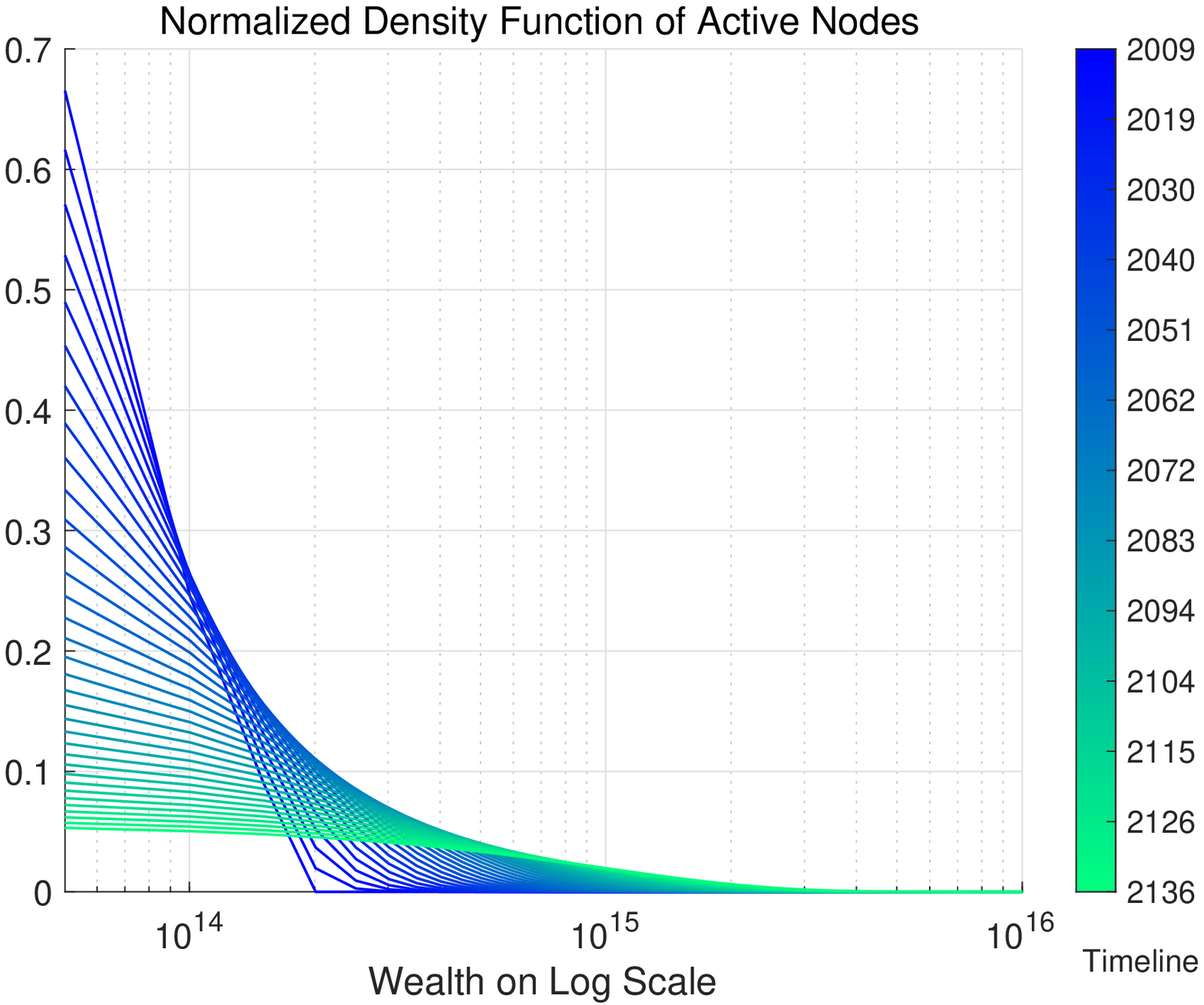}}
\end{subfigure}
\begin{subfigure}[t]{0.46\textwidth}
\subcaption{}
\label{wealth_distribution_data}
\vspace{-1mm}
\centering{\includegraphics[width=0.9\linewidth]{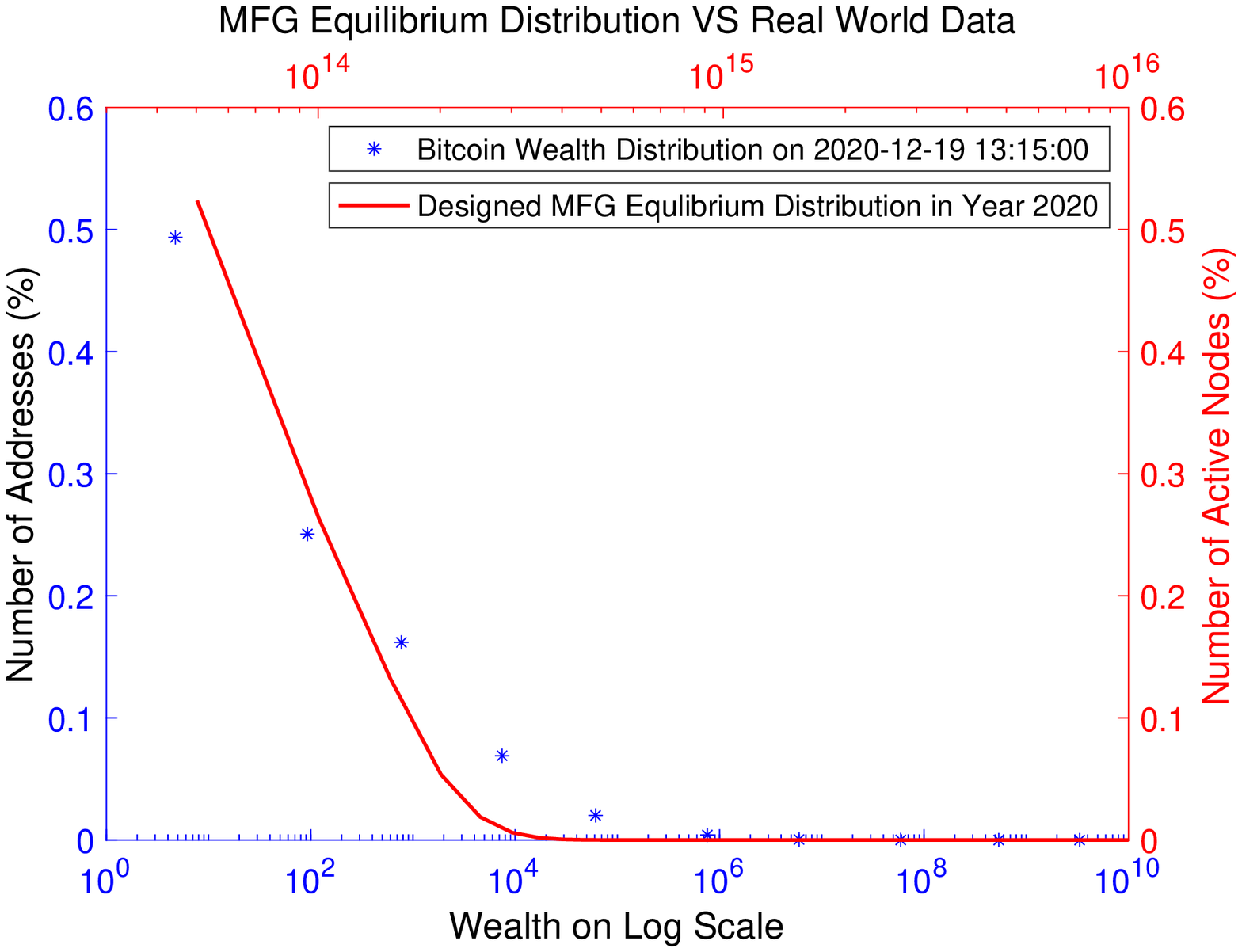}}
\end{subfigure}
\caption{
(\subref{fig_Distribution_Evolution_2D_log}) Evolutionary distribution on a log scale demonstrates that wealth distribution in the Bitcoin blockchain is skewed. (\subref{wealth_distribution_data}) Comparing MFG equilibrium distribution (plotted in the red line) to real world Bitcoin wealth distribution data on 19th Dec, 2020 (plotted in blue stars).
}
\label{wealth_distribution}
\end{figure}

\subsection{Mining Profitability}\label{result_profitbility}

The equilibrium optimal control discounted hashrate is approximately $1.58 \times 10^{15}$ TeraHashes for active nodes. To estimate the instantaneous expected utility, we may input this hashrate into \autoref{estimate_utility_function}. The resulting output is approximately 2963.21 USD. This means that mining is always profitable for active nodes who operate at the MFE of our model.

Further, we can estimate the number of active nodes at the equilibrium by using 
\begin{equation}\label{eq Number of Active Nodes}
\text{Number of Active Nodes} = 
M(t)\int_{X} \int_{B} m(t, x, b) \1_{\hat \alpha(t, x, b) \in O(t)}\diff x \diff b \,, 
\end{equation}
where the density function is illustrated in \autoref{Evolution of the Distribution} and the equilibrium optimal control is in feasible set \ref{active_miner}. The estimated number of active nodes over time is plotted in \autoref{fig_Number_of_Active_Nodes_Model}.  
\begin{figure}[H]
\centering{\includegraphics[width=0.5\linewidth]{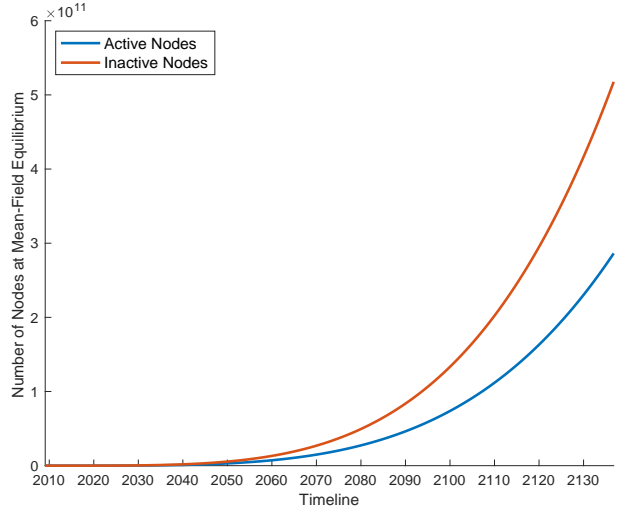}} 
\label{fig_Number_of_Active_Nodes_Model}
\caption{
Number of nodes at MFE, as plotted on a timeline.
The number of active nodes is obtained by using \autoref{eq Number of Active Nodes} (illustrated in blue); the number of inactive nodes is illustrated in red.
}
\label{fig_Number_of_Active_Nodes_Model}
\end{figure}
Even though the majority of nodes lose their wealth and become inactive over time, there is still an increase in active nodes. This represents the profitability of mining, and therefore the incentives for entering the mining market. Hence, the MFE contributes to the security of blockchain.

\subsection{Blockchain Security}\label{result_security}

Blockchain security is measured by the total hashrate that protects a record of valid transactions \citep{gervais2016security}.  
The most common attack is called the 51\% attack, a situation in which an attacker, 
who controls a majority of the hashrate, is probabilistically able to take over 50\% or more of the new block creation, 
or even double spend through rewriting part of the blockchain. 
\cite{nakamoto2008bitcoin} uses a double spend race analysis and demonstrates the exponential decay on the number of block confirmations for an attacker to quickly become computationally impractical.
\cite{grunspan2017satoshi} gvies
a closed-form formula for the probability of success of a double spend attack using a regularized incomplete beta function. 
A finer risk analysis on the probability of hashrate percentage to launch an attack
is provided in the result table of \cite{grunspan2018double}.
Consequently, we can use above results to
analyze 
such attacks by calculating 
the cost of launching an attack using
\[
\text{Percentage of the Total Hashrate Controlled by Attacker}  \times \text{Number of Active Nodes}   \times c \bar \alpha \,, \]
and 
estimating
the likelihood that nodes, like those in the wealth distribution of \autoref{Evolution of the Distribution}, can own majority of the total hashrate as it evolves over the time horizon. 
As expected, \autoref{fig_converged_mean_hashrate} depicts the increase in equilibrium mean hashrate. This makes it more expensive to control hashratess over time and therefore increases the security of the blockchain. The result is demonstrated in \autoref{fig_Blockchain_Security}.

\begin{figure}[H]
%\centering
%\begin{subfigure}[t]{0.6\textwidth}
\centering{\includegraphics[width=0.5\linewidth]
{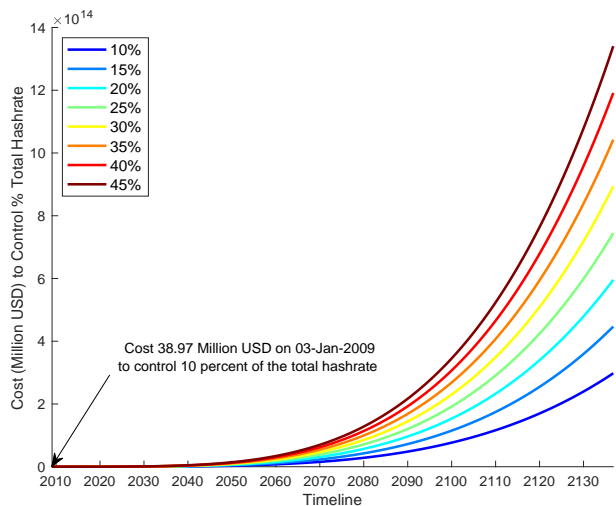}}
%\subcaption{}
%\label{fig_Blockchain_Security_1}
%\end{subfigure}
%\begin{subfigure}[t]{0.45\textwidth}
%\centering
%\vspace{-5.4cm}
%\resizebox{.68\textwidth}{!}{
%\renewcommand{\arraystretch}{1.35}
%\begin{tabular}{c|c}
%    \hline
%   Control \% of Total Hashrate  &  Block Confirmations \\
%    \hline
%    \hline
%10 & 5 \\
%15 & 8 \\ 
%20 & 11 \\
%25 & 15 \\
%30 & 24 \\
%35 & 41 \\
%40 & 89 \\
%45 & 340 \\
%    \hline
%\end{tabular}
%}
%\subcaption{}
%\label{fig_Blockchain_Security_2}
%\end{subfigure}

\caption{The cost for attackers
to control different percentage of the total hashrate over time.
A list of colors in legend is assigned to the cost to control 
10\%, 15\%, 20\%, 25\%, 30\%, 35\%, 40\% and 45\% of the total hashrate respectively.
The result shows it becomes more expensive to control hashrate over time, while
the corresponding success probability is discussed in \citep{nakamoto2008bitcoin}.
%lists the minimum number of block confirmations required in \autoref{fig_Blockchain_Security_1}, so that the attacker has success probability less than 0.1\%.
 }
\label{fig_Blockchain_Security}
\end{figure}

Overall, if mining is profitable at the equilibrium (as discussed in \Cref{result_profitbility}), and if the nodes in the network are unlikely to have the ability to launch attacks at the equilibrium (as discussed in \Cref{result_security}), then the blockchain can be regarded as secure. Consequently, if the blockchain is secure and ensures a decreasing inflation rate (Mechanisms \ref{reward_mechanism_1}--\ref{reward_mechanism_2}), then the blockchain’s consensus protocol is well designed. Generally, if people have faith in the security of the blockchain, then they will perceive the blockchain token as inherently valuable. Such a belief will catalyze the onset of a large number of decentralized nodes that invests resources (e.g., computing power) into maintaining the blockchain. This interdependence indirectly circulates and converts dollars into blockchain token value by creating a healthy mining ecosystem within the consensus protocol design \citep{narayanan2016bitcoin}.  In this way, 
 the consensus protocol acts as a mechanism for propagating the MFE over time.

\section{Conclusion}\label{section_5}

This paper started by formulating a consensus protocol design problem. 
We built a framework for a PoW protocol as an example to demonstrate a game theory approach to the problem. 
The traditional game theory approach makes MPE solutions complex; instead, we used a mean field game approximation and simplified the stochastic differential game model. The problem can be broken down into two coupled PDEs: where an individual node’s optimal control paths are analyzed through solving a Hamilton-Jacobi-Bellman equation; and where the evolution of the joint distributions of wealth and token price are characterized by a Fokker-Planck equation. 
We developed efficient numerical methods to compute both steady states at an infinite-time horizon and with time-dependent evolutionary distributions, and used real-time optimal control for each individual node. 
Our results demonstrate that the existence of the MFE represents the computational power devoted to building blocks in the Bitcoin blockchain. 
This MFE is a reliable approximation of a rational node’s behavior, in the sense that when other nodes use the MFE optimal controls in a finite stochastic differential game, the node’s best response is also to use the same optimal control. 
The MFE thus governs the 
security of the underlying blockchain. 
Hence, the consensus protocol in the Bitcoin blockchain can be regarded as well posed.
In conclusion, we can view blockchain as a mechanism that operates in a decentralized setup and propagates the MFE over time. This can help us to gain a deep understanding of a blockchain’s potential and limitations. 
In future research, this method may be used to explore related problems, including, but not limited to, 
ones concerning design transaction fees or centralization issues in blockchain networks. 
Our framework can also be extended to more complex structures,
or higher state dimensions. 
Further studies may explore numerical methods, such as deep learning and stochastic optimization techniques, to solve these type of  mean field forward-backward stochastic differential equations.

\section*{Acknowledgements}

L. Zhang is supported by the National Key R\(\&\)D Program of China 2021YFF1200500 and the National Natural Science Foundation of China 12050002. 
 Z. Zhou is supported by the National Key R\(\&\)D Program of China, Project Number 2020YFA0712000, 2021YFA1001200  and NSFC grant Number 12031013, 12171013.
 Also, we want to thank Cheng Feng Shen and Ari Klinger for helpful discussions.

\clearpage

\endgroup

\bibliographystyle{agsm}
\bibliography{Reference}

\begin{appendices}

\section{Proof-of-Work Protocol}\label{Appendix_POW}

\n 
In the Bitcoin blockchain, the game selects nodes in proportion to the amount of work done by the computing power devoted to building blocks, which is called the Proof-of-Work (PoW) protocol. This is intuitive because when all new transactions are broadcast to the network of nodes, the validity of those transactions as well as the new block needs to be verified by numerous confirmations. This requires work to be done\footnote{
Details of the work need to be done for maintaining one's own replica of the blockchain $ \mathcal{C}_i$ are listed in following steps \citep{nakamoto2008bitcoin}: 
1) verifying the validity of the new transactions and passing them to neighbours in the network;
2) building block $B(n)$ of new transactions according to consensus protocol $\mathfrak{C}$ \label{building_block},
and broadcasting the proposed block to the network;
3) verifying the proposed block according to consensus protocol $\mathfrak{C}$;
4) Nodes express their acceptance of the new block by working on creating the next block in the chain, using the hash of the accepted block as the previous hash for the new block they are mining.}. 
Defining a measurement for the amount of work done by computational power devoted to building blocks is equivalent to attempting a brute-force search of possible inputs to a cryptographic hash function whose output is trivial to verify but is computationally infeasible to invert. The Bitcoin blockchain chose the SHA256 Hash Function, something moderately hard that ensures the security of the blockchain  \citep{gilbert2003security}. \text{SHA-256} is a cryptographic hash function that maps a string of arbitrary text to a bit array of a fixed size
\[
\text{SHA-256}: \text{a string of text} \to \mathbb{Z}^+ \cup \{0\} \to \{0,1,\ldots,2^{256}-1\}
\]
such that for any given binary representation of the string $y$ in the codomain $\{ 0, 1 \}^{256}$ of $\text{SHA-256}$, it is computationally infeasible to find the element of pre-image $\text{SHA-256}^{-1}(y)$. That is, for mining each block, the only way to find a message that produces a given hash is to attempt a brute-force search of possible inputs to see if they produce a match. Hence, we can use above function to define a measurement.

\begin{mechanism}[Measurement Mechanism in the PoW Protocol]\label{PoW}

$\Psi$ in \autoref{measurement}  is chosen to be the PoW measurement $\alpha: \mathcal{C}  \rightarrow O$, which represents the computational power that a node invests in trying to solve the following puzzle:
\begin{enumerate}

\item Find a nonce such that the following inequality holds \footnote{This condition means when put 80-byte block header described in \autoref{block} and take 256-bit cryptographic hash function of this whole string twice,
then the hash output has to be less than a given 256 bit integer target.
In the rare case that the hash is less than the current target, 
the block is valid and is appended to the node's blockchain.
In the far more likely case that the hash is not less than the current target, 
the nonce is changed and the hash is recomputed.
Since this problem can only be solved by brute-force, more computing power will speed up the resolution. }
\begin{equation}\label{puzzle}
\text{SHA-256} \Big( \text{SHA-256} \big( \big\langle  \text{Merkle root} ~,~ \text{previous block's hash}  ~,~ \text{timestamp} ~,~ \text{nonce} ~ \big\rangle \big)  \Big) < \text{target}
\end{equation}
\item Search for a correct nonce in \autoref{puzzle} repeatly until someone finds and publishes the block \footnote{The nonce will be published as part of block}.
\item When a block is published to the network,
all nodes verify the validity of a new block and expands its local replica of blocktree
$ \mathcal{C}_i$ with the new valid block
\footnote{In the case of fork chain selection,  
nodes may resolve their inconsistency by setting their chain to be the longest chain}
\begin{equation}
{g} \big( \Psi^{-1}(\vO) \big) := \Big\{ C_j ~\Big| ~ j = \hspace{-0.1cm} \argmax_{i \in  \{1,2,\cdots,{M}\}} \valpha \Big\} \,. 
\end{equation} 
\end{enumerate}
\end{mechanism}

Each hash can be seen as an independent trial; in each trial, at time $t$, a nonce is selected and the block header is hashed, during which the node samples a value from a discrete uniform distribution with range
$
\left[0, 2^{256} - 1\right]
$. 
The difficulty $d(t)$, which is recorded in the header of every block represents how hard it is to mine a block compared with the original target.
The target at time $t$ is defined as
$
\frac{2^{224}}{d(t)}
$.
For each node, the probability of solving the block is the cumulative probability of selecting a value from $[0,\frac{2^{224}}{d(t)}]$ from all $2^{256}$ possible choices \citep{bowden2018block}, which is 
$
\frac{2^{224}}{d(t)}\div {2^{256}} = \frac{1}{2^{32}d(t)} % \quad \left(\text{ by design } d_s \geq 1, \forall s \right)
$.

When ${M(t)}$ nodes are mining simultaneously, which can be modeled by a geometric distribution with a probability of success, there is 
\begin{equation}\label{eq:target_adj}
{1-\left(1-\frac{1}{2^{32}d(t)} \right)^{M(t)}}
\end{equation}
for every trial.

After choosing $\Psi$ as the PoW measurement, 
we need to design the payoff function $\pi$ by releasing rewards into the blockchain so that nodes to participate in the game; 
at the same time, we can also design the intensity of the block arrival process, as described in \autoref{def_poisson_intensity}, and the inflation rate of the token in this blockchain.

\begin{mechanism}[Reward Mechanism in PoW Protocol]\label{reward_mechanism}
In the Bitcoin blockchain, the reward is designed as follows:
\begin{enumerate}
\item A mining difficulty adjustment is made in order to maintain stability in the blockchain
\begin{itemize}\label{self_correcting}
\item mining difficulty dynamically adjusts every 2016 blocks \footnote{The previous 2016 blocks is found at an average block arrival rate of 1 blocks every 10 mins},
i.e,
the target in the ${s}^{\mathrm{th}}$ adjustment 
 is given by
\begin{equation}\label{eq:difficulty1}
d_{s} = \frac{1209600 d_{s-1}}{t_{2016s}-t_{2016(s-1)}} \quad \text{with}~~d_0 := 1
\end{equation}
\end{itemize}
\item The creator of a new block can choose a recipient address for block rewards in Bitcoin. These include 
\begin{itemize}\label{total_bitcoins}
\item a coin-creation transaction which starts at 50 and set to halve continually every 210,000 blocks until it reaches 0
\item transaction fees, which are purely voluntary (much like a tip).
\end{itemize}
\end{enumerate}
\end{mechanism}

\n For each time segment of 2,016 blocks denoted $s$, 
the probability of solving the block $\times$ total hashes over the time segment $\left(t_{2016(s-1)},t_{2016s}\right]$ = $2016$ successful blocks, 
applying \autoref{eq:target_adj} we get
\begin{equation}\label{ch2:eqn:1}
\left({1-\left(1-\frac{1}{2^{32} d_s} \right)^{M(t)}} \right) H_s = 2016
 \end{equation}
 where the total hashes over time segment $(t_{2016(s-1)}, t_{2016s}]$ is defined to be
 \[
 H_s := {\int_{t_{2016(s-1)}}^{t_{2016s}} h(t) \diff t} 
 \quad
 \mbox{where}
\quad
h(t) := \sum_{i \in  \{1,2,\cdots,{M(t)}\}} \alpha_i (t)  \,.
\]
 Rearranging \autoref{ch2:eqn:1}, we get
\begin{equation}\label{ch2:eqn:D}
d(t)_{t \in (t_{2016(s-1)}, t_{2016s}]} = d_s =  \frac{1}{2^{32} \left( 1- \sqrt[M(t)]{1- \frac{2016}{H_s}} \right)}
 \end{equation}
From Mechanism \autoref{reward_mechanism}  \autoref{self_correcting},
rearranging \autoref{eq:difficulty1} gives
\begin{equation}\label{eq:difficulty2}
 \frac{2016} {t_{2016s}-t_{2016(s-1)}} = \frac{ d_s}{600 d_{s-1}} 
\end{equation}
Substituting \autoref{ch2:eqn:D} into \autoref{eq:difficulty2},
a block arrival process over time interval $(t_{2016(s-1)}, t_{2016s}]$ can be viewed as a non-homogenous Poisson process $\left\{ N(t) \right\}_{t\geq 0}$ 
with an expected block arrival rate 
\begin{equation}
\left({{{\lambda}}}_t \right)_{t \in (t_{2016(s-1)}, t_{2016s}]} = \frac{2016}{t_{2016s}-t_{2016(s-1)}}
=  \frac{1}{600} \left( \frac{{  1- \sqrt[M(t)]{1- \frac{2016}{{H_{s-1}}} } }}{1- \sqrt[M(t)]{1- \frac{2016}{H_s}} }\right)
\end{equation}
Note that $d_0 := 1$ and \autoref{ch2:eqn:D} gives the initial condition
\[
H_0 := \frac{2016}{1-\left(1-\frac{1}{2^{32}} \right)^{M(t)}}
\]
Mechanism \autoref{reward_mechanism} \autoref{total_bitcoins} is designed to control the inflation.
Define $L := \{1, . . . , 32 \}$ time intervals, such that 
each time interval $l \in L$ includes 210000 blocks.
Thus the number of block reward at time $t$ can be written as
\begin{eqnarray}
k(t) 
= 50 \left(\frac{1}{2}\right)^{l} + \bar k
\end{eqnarray}
where
\[
l = \floor{\frac{2016 N_t}{210000}}
\]
and $\bar k \in [0, 1]$ is a small transaction fees paid in units of tokens.
Hence the number of cumulated tokens in circulation is 
\begin{equation}
{K(t)} = \left(210000 \times 50 \right) \left({\frac {1-(\frac{1}{2})^{l}}{1-\frac{1}{2}}}\right)+ \left(2016-\mod{(210000 l,2016)}\right)\left(50\left(\frac{1}{2}\right)^{l}\right)
\end{equation}

\end{appendices}

\end{document}